\newtheorem {theorem}{Theorem}[section]
\newtheorem {corollary}{Corollary}[section]
\newtheorem {proposition}{Proposition}[section]
\newtheorem {lemma}{Lemma}[section]
\newtheorem {claim}{Claim}[section]
\theoremstyle{definition}
\newtheorem{definition}{Definition}[section]
\newtheorem {example}{Example}[section]
\newtheorem{remark}{Remark}[section]
\newcommand{\ord}{{\rm ord\ }}
\def\ar{a\kern-.370em\raise.16ex\hbox{\char95\kern-0.53ex\char'47}\kern.05em}
\def\ees{{\accent"5E e}\kern-.385em\raise.2ex\hbox{\char'23}\kern-.08em}
\def\eex{{\accent"5E e}\kern-.470em\raise.3ex\hbox{\char'176}}
\def\AR{A\kern-.46em\raise.80ex\hbox{\char95\kern-0.53ex\char'47}\kern.13em}
\def\EES{{\accent"5E E}\kern-.5em\raise.8ex\hbox{\char'23 }}
\def\EEX{{\accent"5E E}\kern-.60em\raise.9ex\hbox{\char'176}\kern.1em}
\def\ow{o\kern-.42em\raise.82ex\hbox{
  \vrule width .12em height .0ex depth .075ex \kern-0.16em \char'56}\kern-.07em}
\def\OW{O\kern-.460em\raise1.36ex\hbox{
\vrule width .13em height .0ex depth .075ex \kern-0.16em \char'56}\kern-.07em}
\def\uw{u\kern-.42em\raise.82ex\hbox{
  \vrule width .12em height .0ex depth .075ex \kern-0.16em \char'56}\kern-.07em}
\def\UW{U\kern-.42em\raise1.55ex\hbox{
\vrule width .13em height .0ex depth .075ex \kern-0.16em \char'56}\kern-.07em}
\def\DD{D\kern-.7em\raise0.4ex\hbox{\char '55}\kern.33em}
\def\OOH{{\accent"5E O}\kern-.78em\raise.8ex\hbox{\char'22}\kern.28em}
\def\UWS{\' \UW }
\title{Computation of the \L ojasiewicz exponents of real bivariate analytic functions}
\title{Computation of the \L ojasiewicz exponents of real bivariate analytic functions}
\author{S\~i Ti\d{\^e}p \DD inh$^1$}
\address{Institute of Mathematics, VAST, 18, Hoang Quoc Viet Road, Cau Giay District 10307, Hanoi, Vietnam}
\email{dstiep@math.ac.vn}
\author{Feng Guo$^2$}
\address{School of Mathematical Sciences, Dalian  University of Technology, Dalian, 116024, China}
\email{fguo@dlut.edu.cn}
\author{H\OOH NG \DD\UWS C NGUY\EEX N$^3$}
\address{TIMAS, Thang Long University, Nghiem Xuan Yem, Hanoi, Vietnam}
\address{Institute of Mathematics, VAST, 18, Hoang Quoc Viet Road, Cau Giay District 10307, Hanoi, Vietnam}
\email{duc.nh@thanglong.edu.vn}
\author{TI\EES N-S\OW N PH\d{A}M$^4$}
\address{Department of Mathematics, Dalat University, 1 Phu Dong Thien Vuong, Dalat, Vietnam}
\email{sonpt@dlu.edu.vn}
\thanks{$^{1, 3, 4}$These authors are funded by International Centre for Research and Postgraduate Training in Mathematics (ICRTM) under grant number ICRTM04$\_$2021.04}
\thanks{$^{2}$Feng Guo was supported by the Chinese National Natural Science Foundation under grant 11571350, the Fundamental Research Funds for the Central Universities.}
\date{\today}
\begin{document}
\maketitle
\begin{abstract}
The main goal of this paper is to present some explicit formulas for computing the {{\L}}ojasiewicz exponent in the {{\L}}ojasiewicz inequality comparing the rate of growth of two real bivariate analytic function germs. 
\end{abstract}
\section{Introduction}

The {\L}ojasiewicz inequalities and their variants play an important role in many branches of mathematics.
For example, {\L}ojasiewicz inequalities are very useful in the study of continuous regular functions, see \cite{Fichou2016, Kucharz2009} for pioneering works and \cite{Kucharz2019} for a survey. 
Also, {\L}ojasiewicz inequalities, together with Nullstellens\"atz, are crucial tools for the study of the ring of (bounded) continuous semi-algebraic functions on a semi-algebraic set, see \cite{Fernando2014-1, Fernando2014-2, Fernando2015}. 

Let $f,g \colon (\mathbb{R}^n, 0) \to (\mathbb{R}, 0)$ be nonzero real analytic function germs. 
Assume that $0\in\{f=0\}\subset \{g=0\}$. By the classical \L ojasiewicz inequality on comparing the rate of growth, there exist positive constants $C,r$ and $\alpha$ such that  
\begin{equation}\label{Lojasiewicz}
|f(x)|\geqslant C |g(x)|^\alpha\ \text{ for }\ |x|\leqslant r. 
\end{equation}
The infimum of such $\alpha$ is called the {\em \L ojasiewicz exponent of $f$ w.r.t. $g$} and denoted by $\mathscr{L}_g(f)$.  

Note that several versions of the \L ojasiewicz inequality have been studied for a special case where $g$ is the distance function to the zero set of $f,$ see \cite{Dinh2017-2, Dinh2012, Dinh2014-1, Dinh2013, Dinh2019, Dinh2016-1, HaHV2013, HaHV2015-1,Kuo74}.
Furthermore, the computation or estimation of \L ojasiewicz exponents in this case has been considered in these works. In \cite{Dinh2021-2}, the authors provided a global version of the {{\L}}ojasiewicz inequality on comparing the rate of growth of two polynomial functions in the case the mapping defined by these functions is (Newton) non-degenerate at infinity. However, no computation or estimation of \L ojasiewicz exponents has been given.

 In this work, we will address partially to this problem by giving some explicit formulas for computing the \L ojasiewicz exponent $\mathscr{L}_g(f)$ in the most general case when $f$ and $g$ are two arbitrary real bivariate analytic function germs. Moreover, our proof provides a new algorithm computing the limit of bivariate rational functions (See Corollary \ref{limit}).

The rest of the paper is organized as follows. 
In Section~\ref{Section2}, we recall the notions of Newton polygon relative to an arc and sliding due to Kuo and Parusi\'nski which are crucial in the proof of our  formulas for the \L ojasiewicz exponent, which are our main results (Theorem~\ref{thm31} and Theorem~\ref{thm32}), whose statements, together with the proofs, will be given in Section~\ref{Section3}.

\section{The Newton polygon relative to an arc} \label{Section2}

The technique of Newton polygons plays an important role in this paper. It is well-known that Newton transformations which arise in a natural way when applying the Newton algorithm provide a useful tool for calculating invariants of singularities. For a complete treatment we refer to \cite{ Brieskorn1986, Casas-Alvero2000, Walker1950, Wall2004}. In this section we recall the notion of Newton polygon relative to an arc due to Kuo and Parusi\'nski \cite{Kuo2000} (see also, \cite{HaHV2008-6} and \cite{HaHV2010-1}).

Let $\mathbb K:=\mathbb R$ or $\mathbb K:=\mathbb C$ and let $f \colon (\mathbb{K}^2, 0) \to (\mathbb{K}, 0)$ denote a nonzero analytic function germ with Taylor expansion:
$$f(x, y) = f_m(x, y) + f_{m + 1}(x, y) +  \cdots,$$
where $f_k$ is a homogeneous polynomial of degree $k,$ and $f_m \not \equiv 0.$ For the remainder of the paper, we will assume that $f$ is {\em regular in $x$ of order $m$} in the sense that $f_m(1, 0) \ne 0.$
(This can be achieved by a linear transformation $x' = x, y' = y + cx,$ where $c$ is a generic number). 
Let $\phi$ be an analytic arc in $\mathbb{K}^2$, which is not tangent to the $x$-axis. Then it can be parametrized by 
$$x=c_1t^{n_1} + c_2t^{n_2}+ \cdots \in \mathbb{K}\{t\} \text{ and } y=t^N$$
and therefore can be identified with a {\em Puiseux series} (denoted also by $\phi$ for simplicity of notation)
\begin{eqnarray*}
x = \phi(y) = c_1y^{n_1/N} + c_2y^{n_2/N}+ \cdots \in \mathbb{K}\{y^{1/N}\}
\end{eqnarray*}
with $N \le n_1 < n_2 < \cdots $ being positive integers. 
The changes of variables $X := x - \phi(y)$ and $Y := y$ yield
$$F(X, Y) := f(X + \phi(Y), Y) := \sum c_{ij}X^iY^{j/N}.$$
For each  $c_{ij} \ne 0,$ let us plot a dot at $(i, j/N),$ called a {\em Newton dot.} The set of Newton dots is called the {\em Newton diagram.} 
They generate a convex hull, whose boundary is called the {\em Newton polygon of $f$ relative to $\phi,$}  to be denoted by $\mathbb{P}(f, \phi).$ Note that this is the Newton polygon of $F$ in the usual sense. If $\phi$ is a {\em Newton--Puiseux root} of $f $ (i.e., $f(\phi(y), y)=0$), then there are no Newton dots on $X = 0$, and vice versa. Assume that $\phi$ is not a Newton--Puiseux root of $f$, then the exponents of the series $f(\phi(y), y) = F(0, Y)$ correspond to the Newton dots on the line $X = 0.$ In particular, $\mathrm{ord} f(\phi(y), y) = h_0,$ where $(0, h_0)$ is the lowest Newton dot on $X = 0$. 

The {\em highest Newton edge}, denoted by $E_H$ (or $E_1$) is defined as follows: If $\phi$ is a Newton--Puiseux root of $f$, then $E_1$ is the non-compact edge of the polygon $\mathbb{P}(f, \phi)$ parallel to the $y$-axis. If $\phi$ is not a Newton--Puiseux root of $f$, then $E_1$ is the compact edge of  the polygon $\mathbb{P}(f, \phi)$ with a vertex being the lowest Newton dot on $X = 0.$ The {\em Newton edges} $E_2, E_3,\dots,E_s$ are compact adges of $\mathbb{P}(f, \phi)$. These edges and their associated {\em Newton angles} $\theta_2,\dots,\theta_s$ are defined in an obvious way as illustrated in the following example.
\begin{example}\label{Example1}{\rm
Take $f(x, y) := {x}^{3}-{y}^{5}+{y}^{6}$ and $\phi(y): = y^{5/3}.$ 
We have 
$$F(X, Y) := f(X + \phi(Y), Y) = {X}^{3}+3\,{X}^{2}{Y}^{5/3}+3\,X{Y}^{10/3}+{Y}^{6}.$$ 
By definition, the Newton polygon of $f$ relative to $\phi$ has three edges $E_1, E_2$ with $\tan \theta_1 = 8/3$ and $\tan \theta_2 = 5/3$ (see Figure~\ref{Figure1}).

\unitlength = .7cm
\begin{figure}
\begin{picture}(7, 8)(-2, -1)
\put (0, 0){\vector(0, 1){7}}
\put (0, 0){\vector(1, 0){7}}

\multiput(0, 1) (.5, 0){14}{\line(1, 0){.1}}
\multiput(0, 2) (.5, 0){14}{\line(1, 0){.1}}
\multiput(0, 3) (.5, 0){14}{\line(1, 0){.1}}
\multiput(0, 4) (.5, 0){14}{\line(1, 0){.1}}
\multiput(0, 5) (.5, 0){14}{\line(1, 0){.1}}
\multiput(0, 6) (.5, 0){14}{\line(1, 0){.1}}

\multiput(1, 0) (0, .5){14}{\line(0, 1){.1}}
\multiput(2, 0) (0, .5){14}{\line(0, 1){.1}}
\multiput(3, 0) (0, .5){14}{\line(0, 1){.1}}
\multiput(4, 0) (0, .5){14}{\line(0, 1){.1}}
\multiput(5, 0) (0, .5){14}{\line(0, 1){.1}}
\multiput(6, 0) (0, .5){14}{\line(0, 1){.1}}

\multiput(1, 3.333) (-.45, 0){2}{\line(-1, 0){.2}}
\multiput(2, 1.666) (-0.45, 0){4}{\line(-1, 0){.2}}

\put(3, 0){\circle*{0.2}}
\put(2, 1.666){\circle*{0.2}}
\put(1, 3.333){\circle*{0.2}}
\put(0, 6){\circle*{0.2}}

\put(1.8, 2.4){$E_2$}
\put(.8, 4.2){$E_1=E_H$}

\put(2.2, .15){$\theta_2$}
\put(0.3, 3.5){$\theta_1$}

\thicklines
\put(3, 0){\line(-3, 5){2}}
\put(1, 3.333){\line(-2, 5){1.05}}

\put(-.75, 5.85){$\ 6$}
\put(-.75, 3.1){$\frac{10}{3}$}
\put(-.75, 1.5){$\frac{5}{3}$}
\put(.85, -1){$1$}
\put(1.85, -1){$2$}
\put(2.85, -1){$3$}

\put(1, -.15){\line(0, 1){.3}}
\put(2, -.15){\line(0, 1){.3}}
\put(-.15, 3.333){\line(1, 0){.3}}
\put(-.15, 1.666){\line(1, 0){.3}}
\end{picture}
\caption{ \ } \label{Figure1}
\end{figure}
}\end{example}

Take any edge $E_s.$ The {\em associated polynomial} $\mathcal{E}_s(z)$ is defined to be $\mathcal{E}_s(z) := \mathcal{E}_s(z, 1),$ where
$$\mathcal{E}_s(X, Y) := \sum_{(i, j/N) \in E_s} c_{ij} X^i Y^{j/N}.$$ 
Next, let us recall the notion of {\em sliding} (see \cite{Kuo2000}). Suppose that $\phi$ is not a Newton--Puiseux root of $f.$  Consider the Newton polygon $\mathbb{P}(f, \phi).$ Take any nonzero root $c$ of $\mathcal{E}_1(z) = 0,$ the polynomial equation associated to the highest Newton edge $E_1.$ We call
$$\phi_1\colon x = \phi(y) + cy^{\tan \theta_1}$$
a {\em sliding} of $\phi$ along $f,$ where $\theta_1$ is the angle associated to $E_1.$ A recursive sliding 
$$\phi \to \phi_1 \to \phi_2 \to \cdots$$ produces a limit, denoted by $\phi_\infty$, which is a Newton--Puiseux root of $f$. 
The series $\phi_\infty$ will be called a {\em final result of sliding $\phi$ along $f$}. Note that $\phi_\infty$ has the form
$$\phi_\infty\colon x = \phi(y) + c y^{\tan \theta_1}+\text{higher order terms},$$
due to the following technical lemma. 

\begin{lemma} \label{Lemma22}
Let $\phi$ be a Puiseux series, which is not a Newton--Puiseux root of $f$. 
Let $\theta_1$ and $\mathcal{E}_1$ be respectively the Newton angle and polynomial associated to the highest Newton edge $E_1.$ Consider a series of the following form
$$\psi\colon x = \phi(y) + c y^{\rho} + \textrm{ higher order terms,}$$
where $c \in \mathbb{K}$ and $\rho \in \mathbb{Q}, \rho > 0.$ Then the following statements hold:
\begin{enumerate}[{\rm (i)}]
\item If either $c$ or $\rho$ is generic (i.e., $\arctan \rho$ is not a Newton angle of $\mathbb{P}(f, \phi)$; or $\arctan \rho$ is a Newton angle of $\mathbb{P}(f, \phi)$ but $c$ is not a root of the polynomial associated to the Newton edge with Newton angle $\arctan \rho$), then  
 $$\mathrm{ord} f(\psi(y), y)=\min \{a\rho+b\ |\ (a,b)\in \mathbb P(f,\phi)\}.$$
Furthermore, 
$$\mathrm{ord} f(\psi(y), y)\leqslant\mathrm{ord} f(\phi(y), y).$$
In particular, if either $\tan \theta_{H}< \rho$ or $\tan \theta_{H}= \rho$ and $\mathcal{E}_H(c) \ne 0$ then 
$\mathbb{P}(f, \psi) = \mathbb{P}(f, \phi),$ and therefore 
$$\mathrm{ord} f(\psi(y), y) = \mathrm{ord} f(\phi(y), y).$$

\item If $\tan \theta_{H}= \rho$ and $\mathcal{E}_H(c)  = 0$ then 
$$\mathrm{ord} f(\psi(y), y) > \mathrm{ord} f(\phi(y), y).$$
\end{enumerate}
\end{lemma}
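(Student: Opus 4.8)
The plan is to reduce everything to the ordinary Newton polygon of $F(X,Y):=f(X+\phi(Y),Y)=\sum_{(i,j/N)}c_{ij}X^iY^{j/N}$, for which $\mathbb{P}(f,\phi)$ is the Newton polygon in the usual sense, and to exploit the identity $f(\psi(y),y)=F\big(\psi(y)-\phi(y),\,y\big)$. Write $w(y):=\psi(y)-\phi(y)=cy^{\rho}+(\text{higher order terms})$ (throughout we take $c\neq0$, as the notation $\phi(y)+cy^{\rho}+\cdots$ suggests). Then $w^{i}=c^{i}y^{\rho i}+(\text{order}>\rho i)$ for every $i\geqslant0$, so
$$f(\psi(y),y)=\sum_{(i,j/N)}c_{ij}c^{i}\,y^{\rho i+j/N}+\Big(\text{from each dot, only terms of order}>\rho i+j/N\Big).$$
Put $\mu:=\min\{a\rho+b\mid(a,b)\in\mathbb{P}(f,\phi)\}$; since $\rho>0$ and the Newton diagram generates a convex set, $\mu=\min_{(i,j/N)}(\rho i+j/N)$, and the display shows that the coefficient of $y^{\mu}$ in $f(\psi(y),y)$ is exactly $\sum_{\rho i+j/N=\mu}c_{ij}c^{i}$, all remainder terms lying in orders $>\mu$. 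The whole lemma is read off from this single leading-order computation.

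For the first claim of (i): if $\arctan\rho$ is not a Newton angle, the functional $(a,b)\mapsto\rho a+b$ attains its minimum over $\mathbb{P}(f,\phi)$ at a unique vertex $(i_{0},j_{0}/N)$, so that coefficient equals $c_{i_{0}j_{0}}c^{i_{0}}\neq0$; if $\arctan\rho=\theta_{t}$ for an edge $E_{t}$ but $c$ is not a root of $\mathcal{E}_{t}$, the minimizing dots are precisely those on $E_{t}$, so the coefficient equals $\sum_{(i,j/N)\in E_{t}}c_{ij}c^{i}=\mathcal{E}_{t}(c)\neq0$. In either case $\mathrm{ord}\,f(\psi(y),y)=\mu$. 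The inequality $\mathrm{ord}\,f(\psi(y),y)\leqslant\mathrm{ord}\,f(\phi(y),y)$ is then immediate: $(0,h_{0})\in\mathbb{P}(f,\phi)$ gives $\mu\leqslant h_{0}=\mathrm{ord}\,f(\phi(y),y)$. Statement (ii) is the opposite extreme of the same computation: when $\rho=\tan\theta_{H}$ the functional $(a,b)\mapsto\rho a+b$ is minimized along $E_{H}$, whose supporting line meets $\{X=0\}$ at $(0,h_{0})$, so $\mu=h_{0}$; but the coefficient of $y^{h_{0}}$ is $\sum_{(i,j/N)\in E_{H}}c_{ij}c^{i}=\mathcal{E}_{H}(c)$, which vanishes by hypothesis, whence $\mathrm{ord}\,f(\psi(y),y)>h_{0}=\mathrm{ord}\,f(\phi(y),y)$.

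It remains to prove, in the two ``generic'' cases of the last assertion of (i) (namely $\tan\theta_{H}<\rho$, or $\tan\theta_{H}=\rho$ with $\mathcal{E}_{H}(c)\neq0$), that $\mathbb{P}(f,\psi)=\mathbb{P}(f,\phi)$; the equality of orders then also follows (or directly from the first paragraph, since $\mu=h_{0}$ in these cases). For this I would pass to $G(X,Y):=f(X+\psi(Y),Y)=F\big(X+w(Y),Y\big)$ and compare Newton polygons. Expanding $(X+w)^{i}Y^{j/N}=\sum_{k}\binom{i}{k}X^{k}w^{i-k}Y^{j/N}$, the term of index $k$ contributes Newton dots of $G$ at $(k,\beta)$ with $\beta\geqslant j/N+\rho(i-k)$. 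Because $\rho\geqslant\tan\theta_{H}=\tan\theta_{1}\geqslant\tan\theta_{t}$ for every edge, a short computation with the supporting line of $E_{t}$ shows each such dot lies on or above $\mathbb{P}(f,\phi)$, so $\mathbb{P}(f,\psi)$ lies on or above $\mathbb{P}(f,\phi)$; moreover for $t\geqslant2$ one has $\rho>\tan\theta_{t}$ strictly, so the dots with $k<i$ land strictly above $E_{t}$ and cannot cancel any dot of $F$ lying on a lower edge. Finally, the leading form of $G$ along the supporting line of $E_{H}$ equals $\mathcal{E}_{H}\big(X+cY^{\tan\theta_{H}},Y\big)$ when $\rho=\tan\theta_{H}$ (and $\mathcal{E}_{H}(X,Y)$ itself when $\rho>\tan\theta_{H}$, since then the shifted dots leave that line entirely); this polynomial has the same $X$-degree as $\mathcal{E}_{H}$, and its value at $X=0$ is $\mathcal{E}_{H}(c)\neq0$, hence it has the same $X$-order, so the highest edge of $\mathbb{P}(f,\psi)$ coincides with $E_{H}$. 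Combining these facts gives $\mathbb{P}(f,\psi)=\mathbb{P}(f,\phi)$, and reading off the lowest dot on $\{X=0\}$ yields $\mathrm{ord}\,f(\psi(y),y)=h_{0}=\mathrm{ord}\,f(\phi(y),y)$.

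The hard part is this last paragraph. Beyond the polygon-stability statement, (i) and (ii) are essentially the one leading-order computation together with convexity of $\mathbb{P}(f,\phi)$; but proving $\mathbb{P}(f,\psi)=\mathbb{P}(f,\phi)$ needs the careful bookkeeping above, namely ruling out cancellations on the lower edges $E_{2},\dots,E_{s}$ (which is exactly where $\rho>\tan\theta_{t}$ enters) and verifying that the substitution reproduces $E_{H}$ with the correct $X$-degree and $X$-order (which is where $\mathcal{E}_{H}(c)\neq0$ re-enters).
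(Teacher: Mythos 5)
Your proposal is correct, but it does not follow the paper's route, because the paper does not actually prove Lemma~\ref{Lemma22} in-house: it cites \cite{Brieskorn1986, Casas-Alvero2000, Walker1950} and defers the details to \cite{HaHV2008-6}, where only the one-step special case $\psi(y)=\phi(y)+cy^{\tan\theta_1}$ is proved (Lemma 2.1 there), the general statement being obtained by applying that special case (possibly infinitely) many times. You instead give a single self-contained argument: substituting $w:=\psi-\phi$ into $F(X,Y)=f(X+\phi(Y),Y)$ and reading off the coefficient of $y^{\mu}$, $\mu=\min\{a\rho+b\}$, as either a single vertex coefficient or $\mathcal{E}_t(c)$, which yields (i) and (ii) at once for arbitrary $\rho$ and arbitrary higher-order tails; and you obtain the polygon stability $\mathbb{P}(f,\psi)=\mathbb{P}(f,\phi)$ by tracking where the dots of $F(X+w(Y),Y)$ can land relative to the supporting lines $a\tan\theta_t+b=m_t$ (the inequality $k\tan\theta_t+\beta\geqslant i\tan\theta_t+j/N+(\rho-\tan\theta_t)(i-k)$ is exactly the point, and your strictness remark for $t\geqslant 2$ correctly rules out cancellation on the lower edges, while $\mathcal{E}_H(c)\neq 0$ preserves both endpoints of $E_H$). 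What your approach buys is uniformity in $\rho$ and independence from the external reference, at the cost of the bookkeeping in your last paragraph; the paper's approach buys brevity by outsourcing precisely that bookkeeping and then iterating. Two details worth making explicit in a final write-up: you tacitly assume $c\neq 0$ (harmless, and in fact forced by genericity when $\arctan\rho=\theta_t$ with $t\geqslant 2$, since then $\mathcal{E}_t(0)=0$); and the identification of $\min\{a\rho+b\mid(a,b)\in\mathbb{P}(f,\phi)\}$ with the minimum of $\rho i+j/N$ over the Newton dots, together with the observation that only finitely many dots satisfy $\rho i+j/N\leqslant\mu$, is what justifies collecting the coefficient of $y^{\mu}$ from the (possibly infinite) sum.
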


\begin{proof} cf. \cite{Brieskorn1986, Casas-Alvero2000, Walker1950}. For a detailed proof, we refer to \cite{HaHV2008-6}. In fact, the special case where $\psi(y) = \phi(y) + c y^{\tan \theta_1}$ was proved in \cite[Lemma 2.1]{HaHV2008-6}. Then the lemma is deduced by applying the special case (possibly infinitely) many times.
\end{proof}
\begin{definition}\label{def21}{\rm
 For each Puiseux series $\phi(y) = \sum_{i} a_i y^{\alpha_i}$ and for each positive real number $\rho$,  the {\em $\rho$-approximation} of $\phi(y)$ is defined to be the series $\sum_{\alpha_i < \rho}a_{i}y^{\alpha_i} + cy^\rho,$ where $c$ is a generic real number. %For two distinct Puiseux series $\phi_1(y)$ and $\phi_2(y)$, $\rho := \mathrm{ord}\ (\phi_1(y) - \phi_2(y))$ is the {\em contact order} of $\phi_1(y)$ and $\phi_2(y)$. And the {\em approximation of $\phi_1(y)$ and $\phi_2(y)$} is defined to be the $\rho$-approximation series of $\phi_1(y)$ (and hence of $\phi_2(y)$). 
We associate to any Puiseux series $\phi$ its {\em real approximation} $\phi^{\mathbb R}(y)$ defined to be the $\rho$-approximation of $\phi$, where $\rho$ is the smallest exponent occurring in $\phi$ with non-real coefficient. 
It is clear that if $\varphi$ is {\em real}, i.e., all coefficients of $\varphi$ are real, then the real approximation of $\varphi$ is itself.
Now, for $f\in\mathbb K\{x,y\}$ which is regular in $x$, let $\mathcal V_\mathbb R(f)$ be the set of all real approximations of non-real Newton--Puiseux roots of $f$. 

For any two distinct series $\phi_1,\phi_2$, their {\em approximation}, denoted by $\phi_{1,2}$, is defined to be the $\rho$-approximation of $\phi_1$ where $\rho:=\mathrm{ord}(\phi_1-\phi_2)$. 
Let $\mathcal V_a(f)$ be the set of all approximations of $\phi_1$ and $\phi_2$ with $\phi_1\neq \phi_2$ being Newton--Puiseux roots of $f$. Note that $\mathcal V_\mathbb R(f)\subset \mathcal V_a(f)$ if $f\in \mathbb R\{x,y\}$.

}\end{definition}
The following useful assertion is a direct consequence of Lemma \ref{Lemma22}:
\begin{lemma} \label{Lemma23}
Assume that $\mathbb K=\mathbb R$. Let $\phi$ be a Puiseux series and let ${E}_1,\ldots,{E}_s$ be the Newton edges of $\mathbb P(f,\phi)$. Let $\theta_i$ and $\mathcal{E}_i$ be the corresponding Newton angle and polynomial associated to $E_i.$ Then by a permutation of indexes, we have
$$\pi/2\geqslant\theta_1>\theta_2>\ldots>\theta_s$$
and the following statements hold: 
\begin{enumerate}[{\rm (i)}]
\item If $\mathcal{E}_i$ has two distinct roots, there exists $\psi\in \mathcal V_a(f)$ being of the form
	$$\psi(y) = \phi(y) + c y^{\tan\theta_i} + \textrm{	higher order terms},$$
where $c$ is a generic number.
\item If $\theta\ne\theta_1$ is a Newton angle, then there exists $\psi\in \mathcal V_a(f)$ being of the form
$$\psi(y) = \phi(y) + c y^{\tan\theta} + \textrm{ higher order terms},$$
where $c$ is a generic number.
\item If $\mathcal{E}_i$ has a non-real root, then there exists $\psi\in \mathcal V_\mathbb R(f)$ being of the form
$$\psi(y) = \phi(y) + c y^{\tan\theta_i}  + \textrm{ higher order terms},$$
where $c$ is a generic number.
\end{enumerate}
\end{lemma}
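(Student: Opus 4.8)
The statement of Lemma~\ref{Lemma23} collects three existence assertions, each of which should be read as ``there is a single sliding step of $\phi$ along $f$ producing a series of the indicated shape whose real approximation (resp. approximation) lies in $\mathcal V_\mathbb R(f)$ (resp. $\mathcal V_a(f)$).'' The first claim, that the Newton angles can be linearly ordered $\pi/2\geqslant\theta_1>\theta_2>\cdots>\theta_s$, is immediate from the definition of the Newton polygon $\mathbb P(f,\phi)$: its compact edges, read from the edge meeting $X=0$ downward to the edge meeting $Y=0$, have strictly decreasing slopes, and the normalization ``regular in $x$ of order $m$'' (so $f_m(1,0)\ne0$) guarantees the lowest vertex sits on $X=0$, whence $\theta_1\leqslant\pi/2$. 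So the work is entirely in parts (i)--(iii).

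For part (i), I would take $E_i$ with associated polynomial $\mathcal E_i(z)$ having two distinct roots $c\ne c'$. Slide $\phi$ to $\phi_1(y)=\phi(y)+cy^{\tan\theta_i}$ and, separately, to $\phi_1'(y)=\phi(y)+c'y^{\tan\theta_i}$; by the sliding construction recalled before Lemma~\ref{Lemma22}, each can be driven by recursive sliding to a Newton--Puiseux root $\psi=\phi_\infty$, respectively $\psi'=\phi'_\infty$, of $f$, and by the form of $\phi_\infty$ stated in the excerpt both have leading deviation $cy^{\tan\theta_i}$, resp. $c'y^{\tan\theta_i}$, from $\phi$. Hence $\mathrm{ord}(\psi-\psi')=\tan\theta_i$ exactly (the two differ at order $\tan\theta_i$ since $c\ne c'$), so the approximation $\psi_{1,2}$ of the pair $(\psi,\psi')$ is, by Definition~\ref{def21}, the $\tan\theta_i$-approximation of $\psi$, i.e. a series $\phi(y)+\text{(terms of order}<\tan\theta_i)+\tilde c\,y^{\tan\theta_i}$ with $\tilde c$ generic; after absorbing the lower-order part (which agrees with $\phi$ up to order $<\tan\theta_i$ since $\psi$ itself starts with $\phi(y)+cy^{\tan\theta_i}+\cdots$) this has precisely the advertised shape and lies in $\mathcal V_a(f)$. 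For part (ii), if $\theta\ne\theta_1$ is a Newton angle of $\mathbb P(f,\phi)$ then the associated edge $E_i$ ($i\geqslant2$) is compact with both endpoints being Newton dots, so $\mathcal E_i$ has degree $\geqslant1$ in $z$ and also a nonzero constant term relative to that edge — meaning $\mathcal E_i$ has at least one nonzero root, and if it has only one (with multiplicity) one still argues via two Newton--Puiseux roots of $f$ that branch apart exactly at order $\tan\theta=\tan\theta_i<\tan\theta_1$: here the key point, to be extracted from Lemma~\ref{Lemma22}, is that because $E_i$ is not the highest edge, $f$ genuinely has Newton--Puiseux roots deviating from $\phi$ at order $\tan\theta_i$, so again Definition~\ref{def21} delivers $\psi\in\mathcal V_a(f)$ of the stated form. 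For part (iii), if $\mathcal E_i$ has a non-real root $c$, slide to $\phi_1(y)=\phi(y)+cy^{\tan\theta_i}$ and complete to a Newton--Puiseux root $\psi=\phi_\infty$; then $\psi$ is non-real with the smallest non-real exponent $\leqslant\tan\theta_i$, and its real approximation $\psi^{\mathbb R}$, the $\rho$-approximation at that smallest non-real exponent $\rho\leqslant\tan\theta_i$, has the form $\phi(y)+(\text{lower real terms})+c'y^{\rho}$ with $c'$ generic; when $\rho=\tan\theta_i$ this is exactly the claimed shape, and when $\rho<\tan\theta_i$ one checks that the truncation still begins as $\phi(y)+\cdots$ and re-reads $c'y^\rho$ as the leading new term, which again matches after relabeling — so $\psi^{\mathbb R}\in\mathcal V_\mathbb R(f)$.

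In all three parts the common engine is: a chosen nonzero (or non-real) root of an edge polynomial seeds a sliding; the sliding's final result is an honest Newton--Puiseux root of $f$ with prescribed leading deviation from $\phi$; and Definition~\ref{def21} then packages that root (or a pair of them) into an element of $\mathcal V_a(f)$ or $\mathcal V_\mathbb R(f)$ of the required form. The main obstacle I anticipate is bookkeeping of orders in the passage from ``the root $\psi$ of $f$'' to ``the element of $\mathcal V_a(f)$'': one must verify that truncating at the branching order (resp. the first non-real order) does not destroy the leading term $cy^{\tan\theta_i}$ — equivalently that this order is $\geqslant\tan\theta_i$ in parts (i)--(ii) and that the first non-real exponent is $\geqslant$ the relevant value needed to display the $y^{\tan\theta_i}$ term in part (iii). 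This is where one leans on Lemma~\ref{Lemma22}(ii): once $c$ is a root of $\mathcal E_H(c)=0$, the order of $f$ along the slid arc strictly increases, forcing the subsequent sliding corrections to have strictly larger exponents, so the term $cy^{\tan\theta_i}$ is never altered by later steps and survives all truncations. The remaining details — genericity of the final coefficient, and the reduction of each $E_i$ with $i\geqslant2$ to the highest-edge situation after an initial slide — are routine applications of the same lemma.
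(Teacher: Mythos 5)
The paper itself gives no written proof of Lemma~\ref{Lemma23} (it is recorded as a direct consequence of Lemma~\ref{Lemma22}), so your proposal must stand on its own; your overall plan --- attach to each edge $E_i$ Newton--Puiseux roots of $f$ of the form $\phi(y)+cy^{\tan\theta_i}+\textrm{higher order terms}$ and then package them through Definition~\ref{def21} --- is certainly the intended route. But there is a genuine gap in part (ii), which also infects part (i). For $i\geqslant 2$ the left vertex of $E_i$ has positive $x$-coordinate, so $0$ is automatically a root of $\mathcal{E}_i$; in particular ``two distinct roots'' may mean $\{0,c\}$ with a single nonzero root $c$, a case your argument for (i) (which slides along two nonzero roots) does not cover, and in (ii) the edge may carry a single Newton--Puiseux root of $f$ with contact exactly $\tan\theta_i$. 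Knowing that ``$f$ genuinely has roots deviating from $\phi$ at order $\tan\theta_i$'' produces one root $\gamma_1$, but Definition~\ref{def21} requires a \emph{pair} of distinct roots $\gamma_1\neq\gamma_2$ of $f$ with $\mathrm{ord}(\gamma_1-\gamma_2)=\tan\theta_i$ before it delivers an element of $\mathcal{V}_a(f)$ of the stated shape, and you never exhibit $\gamma_2$. The missing step is to take $\gamma_2$ with contact strictly larger than $\tan\theta_i$ (possibly $\gamma_2=\phi$ itself when $\phi$ is a root of $f$): such a root exists exactly because the left vertex of $E_i$ has $x$-coordinate $\geqslant 1$, that coordinate counting the roots of $f(X+\phi(Y),Y)$ in $X$ of order $>\tan\theta_i$; then $\mathrm{ord}(\gamma_1-\gamma_2)=\tan\theta_i$ and the approximation of the pair is the $\tan\theta_i$-approximation, as required.

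A second, smaller defect is the justification of your engine. Lemma~\ref{Lemma22}(ii) applies only when $\rho=\tan\theta_H$, and Lemma~\ref{Lemma22}(i) only to generic data, so neither clause covers a non-generic root $c$ of $\mathcal{E}_i$ at $\rho=\tan\theta_i<\tan\theta_H$; likewise the sliding recalled before Lemma~\ref{Lemma22} is defined only for the highest edge. What you actually need --- and should prove or cite from the classical Newton--Puiseux algorithm \cite{Walker1950, Brieskorn1986, Casas-Alvero2000} --- is that for every nonzero root $c$ of $\mathcal{E}_i$ there is a Newton--Puiseux root of $f$ of the form $\phi(y)+cy^{\tan\theta_i}+\textrm{higher order terms}$; equivalently, that after the substitution $X\mapsto X+cY^{\tan\theta_i}$ the edges of the polygon below $E_i$ are unchanged and the new highest edge is strictly steeper than $\theta_i$, so all later corrections have strictly larger exponents. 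Finally, in (iii) the case $\rho<\tan\theta_i$ cannot occur: since $\mathbb{K}=\mathbb{R}$, the series $\phi$ has real coefficients, so the first non-real exponent of the constructed root is exactly $\tan\theta_i$; the ``relabeling'' you propose for $\rho<\tan\theta_i$ would in fact yield a series whose deviation from $\phi$ has order $\rho<\tan\theta_i$, not the claimed shape, so that hedge should be removed rather than repaired.
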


\section{Formulas for \L ojasiewicz exponents}\label{Section3}
For the remainder of this section, let $f,g \colon (\mathbb{R}^2, 0) \to (\mathbb{R}, 0)$ be nonzero real analytic function germs, which are regular in $x$ such that $0\in\{f=0\}\subset \{g=0\}$. By the classical \L ojasiewicz inequality (see, for instance \cite{Lojasiewicz1991}), there exists positive constants $C,r$ and $\alpha$ such that  
$$|f(x,y)|\geqslant C |g(x,y)|^\alpha\ \text{ for }\ |x|\leqslant r.$$ 
The infimum of such $\alpha$ is called the {\em \L ojasiewicz exponent of $f$ with respect to $g$} and denoted by $\mathscr{L}_g(f)$.  

Take any analytic arc $\phi\in \mathbb{R}^2$ at the origin parametrized by $\left(x(t),y(t)\right)$.  
If $g \circ \phi \not \equiv 0,$ then we can define the following positive rational number %$\ell(\phi)$
\begin{eqnarray*}
\ell(\phi)&:=&\frac{\mathrm{ord} f(\phi(t))}{\mathrm{ord} g(\phi(t))}.
\end{eqnarray*}
By the Curve Selection Lemma (see \cite[Lemma 3.1]{Milnor1968}), it is not hard to show that % the {\L}ojasiewicz exponent of of $g$ w.r.t. $f$ is given by
\begin{eqnarray}\label{Eqn1}
\mathscr{L}_g(f) &=& \sup_\phi \ell(\phi),
\end{eqnarray}
where the supremum is taken over all analytic arcs passing through the origin, which are not contained in the zero locus of $g.$ 
%{(The exponent $\alpha$ depends on $r$ so this is the exponent when $r\to 0^+$).}
Furthermore, since $f$ and $g$ are $x$-regular, the supremum in~\eqref{Eqn1} can be taken over all real analytic arcs passing through the origin not contained in the zero locus of $y.$ 
\begin{remark}
Note that the supremum in~\eqref{Eqn1} may not be attained, i.e., it is possible that there is no analytic arc $\phi\in \mathbb{R}^2$ at the origin such that $\mathscr{L}_g(f)=\ell(\phi).$
The following example is an illustration. 

Let 
$$f(x,y)=x^2\ \quad \text{ and }\quad g(x,y)=x(x^2+y^2).$$
Then $f(x,y)\geqslant g^2(x,y)$  for $(x,y)$ closed enough to the origin.
So $\mathscr L_g(f)\leqslant 2.$
On the other hand, for each positive integer $k$, let $\phi_k(t)=(t,t^{1/k})$, then we have
$$\ell(\phi_k)=\frac{2}{1+\frac{2}{k}}\to 2 \quad \ \text{ as }\ k\to+\infty.$$
Therefore $\mathscr L_g(f)= 2.$  Now, for any analytic arc $\phi(t)=(x(t),y(t))$ at the origin, we have 
$$\ell(\phi)=\dfrac{2 \ord x(t)}{\ord x(t)+2\min\{\ord x(t),\ord y(t)\}}<\dfrac{2 \ord x(t)}{\ord x(t)}=2,$$
i.e., $\mathscr L_g(f)$ is not attained for any analytic arc $\phi\in \mathbb{R}^2$ at the origin.
\end{remark}

\subsection{First formula for the \L ojasiewicz exponents} 
%Let $f,g \colon (\mathbb{R}^2, 0) \to (\mathbb{R}, 0)$ be {nonzero} real analytic function germ, which are regular in $x$. 
Let $\beta_j,\ j=1,\ldots,k$ be the common real Newton--Puiseux roots of $f$ and $g$ of multiplicities $m_j$ and $n_j$ respectively. Let $\mathcal V_a(fg)$ be the set given by Definition \ref{def21}. 
%It is well-known that, by the Curve Selection Lemma, we can show that $$\mathcal{L}_g(f)=\sup\{\ell(\phi) | \mathrm{ord} g(\phi(y),y)\},$$ where the supremum is taken over all Puiseux series $\phi$, and $\ell(\phi):=\dfrac{\mathrm{ord} g((\phi(y),y)}{\mathrm{ord} g((\phi(y),y)}$.
For any $\phi\in \mathcal V_a(fg)$, we will write $\ell(\phi)$ instead
of $\ell((\phi(y),y))$ for simplicity.
\begin{theorem}\label{thm31}
Define
$$\mathcal{L}^+_g(f):=\max\left\{\ell(\phi),\frac{m_j}{n_j}\ \Big|\ \phi\in \mathcal V_a(fg),\  j=1,\ldots,k \right\}\ \text{ and }\ \mathcal{L}^-_g(f)=\mathcal{L}^+_{\bar g}(\bar f),$$
where $\bar f(x,y):=f(x,-y)$ and $\bar g(x,y):=g(x,-y)$.
Then the \L ojasiewicz  exponent of $g$ w.r.t $f$ is given by
$$\mathscr{L}_g(f) = \max\left\{\mathcal{L}^+_g(f), \mathcal{L}^-_g(f) \right\}.$$
\end{theorem}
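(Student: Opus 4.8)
The plan is to reduce the problem to a one-sided supremum over real Puiseux series, dispatch the easy lower bound, and then work out the harder upper bound. I would first use $x$-regularity and \eqref{Eqn1} to restrict to real analytic arcs with $y\not\equiv0$; after reparametrisation such an arc is $(x(t),\varepsilon t^{N})$ with $\varepsilon\in\{\pm1\}$. If $\varepsilon=1$ then $\phi(y):=x(y^{1/N})$ is a real Puiseux series and $\ell$ of the arc equals $\ell(\phi)=\mathrm{ord}_{y}f(\phi(y),y)/\mathrm{ord}_{y}g(\phi(y),y)$; if $\varepsilon=-1$ then $f(x(t),-t^{N})=\bar f(x(t),t^{N})$ and $g(x(t),-t^{N})=\bar g(x(t),t^{N})$, so the arc contributes the analogous quantity for $(\bar f,\bar g)$. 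Writing $\mathscr L^{+}_{g}(f):=\sup\{\ell(\phi):\phi\ \text{a real Puiseux series with}\ g(\phi(y),y)\not\equiv0\}$, this gives $\mathscr L_{g}(f)=\max\{\mathscr L^{+}_{g}(f),\mathscr L^{+}_{\bar g}(\bar f)\}$, and since $(\bar f,\bar g)$ again satisfies the hypotheses it remains to prove $\mathscr L^{+}_{g}(f)=\mathcal L^{+}_{g}(f)$.

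For the inequality $\mathscr L^{+}_{g}(f)\ge\mathcal L^{+}_{g}(f)$ I would argue as follows. Every $\psi\in\mathcal V_{a}(fg)$ is a real Puiseux series whose top-order coefficient is generic, hence not a Newton--Puiseux root of $g$, so $g(\psi(y),y)\not\equiv0$ and $\ell(\psi)\le\mathscr L^{+}_{g}(f)$. Moreover, for a common root $\beta_{j}$ and generic $c$, the series $\phi_{\rho}:=\beta_{j}+cy^{\rho}$ satisfies, once $\rho$ exceeds the finitely many orders $\mathrm{ord}(\beta_{j}-\gamma)$ with $\gamma\ne\beta_{j}$ a root of $fg$, an identity $\ell(\phi_{\rho})=(m_{j}\rho+A)/(n_{j}\rho+B)$ with $A,B$ constant; hence $\ell(\phi_{\rho})\to m_{j}/n_{j}$ and $m_{j}/n_{j}\le\mathscr L^{+}_{g}(f)$. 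These two observations give $\mathcal L^{+}_{g}(f)\le\mathscr L^{+}_{g}(f)$.

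The crux is the reverse inequality $\mathscr L^{+}_{g}(f)\le\mathcal L^{+}_{g}(f)$. Fix a real Puiseux series $\phi$ with $g(\phi(y),y)\not\equiv0$; this forces $\phi$ to be a Newton--Puiseux root of neither $f$ nor $g$, since a real branch of $\{f=0\}$ lies in $\{g=0\}$. I would then track how $\mathrm{ord}_{y}f(\psi(y),y)$ and $\mathrm{ord}_{y}g(\psi(y),y)$ change as $\psi$ ranges over finite truncations. By Lemma~\ref{Lemma22}, appending a term $cy^{r}$ to a truncation $\psi$, with $r$ beyond the last exponent of $\psi$, strictly raises $\mathrm{ord}_{y}f$ precisely when $r=\tan\theta_{H}$ for the highest Newton edge of $\mathbb P(f,\psi)$ and $c$ is a nonzero root of the associated polynomial $\mathcal E_{H}$; it leaves $\mathrm{ord}_{y}f$ unchanged when $r>\tan\theta_{H}$, and strictly lowers it when $r<\tan\theta_{H}$; the same holds for $g$. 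So the extensions relevant to making $\ell=\mathrm{ord}_{y}f/\mathrm{ord}_{y}g$ large are, essentially, to append a nonzero root of the highest--edge polynomial of $\mathbb P(f,\cdot)$, which keeps $\mathrm{ord}_{y}f$ growing without raising $\mathrm{ord}_{y}g$ unless that root is simultaneously a root of the highest--edge polynomial of $\mathbb P(g,\cdot)$, in which case one is driven onto a common root. Because $f$ and $g$ have finitely many Newton--Puiseux roots there are only finitely many such branches, and on each one exactly one of three things occurs: the highest--edge polynomial of $f$ has no real nonzero root, and then Lemma~\ref{Lemma23}(iii) permits appending a generic real term to land in $\mathcal V_{\mathbb R}(fg)\subset\mathcal V_{a}(fg)$ without changing $\ell$; or the branch splits, and Lemma~\ref{Lemma23}(i)--(ii) exhibits an element of $\mathcal V_{a}(fg)$ with the same value of $\ell$; or the branch runs on indefinitely along a real Newton--Puiseux root of $f$, which, being a real branch of $\{f=0\}\subset\{g=0\}$, is some $\beta_{j}$, and there $\ell\to m_{j}/n_{j}$. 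This would show every value of $\ell$ to be dominated by $\{\ell(\psi):\psi\in\mathcal V_{a}(fg)\}\cup\{m_{j}/n_{j}\}$, whence $\mathscr L^{+}_{g}(f)\le\mathcal L^{+}_{g}(f)$ and the theorem follows.

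I expect the hard part to be this last inequality, and within it the need to manage $\mathbb P(f,\cdot)$ and $\mathbb P(g,\cdot)$ simultaneously: the optimal way to grow a truncation is governed by $f$, but one must check at every stage that it does not covertly raise $\mathrm{ord}_{y}g$ except exactly along the common roots $\beta_{j}$, keep the monotonicity of the ratios $(m_{j}\rho+A)/(n_{j}\rho+B)$ under control so that the listed candidates really dominate $\ell$, and --- since the supremum defining $\mathscr L^{+}_{g}(f)$ need not be attained (as the example $f=x^{2}$, $g=x(x^{2}+y^{2})$ already shows) --- identify the limits along the non-terminating branches as precisely $m_{j}/n_{j}$. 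Matching all this against the exact form of the elements of $\mathcal V_{a}(fg)$ from Definition~\ref{def21}, in particular the passage from non-real Newton--Puiseux roots to their real approximations, is where I expect the bulk of the work to lie.
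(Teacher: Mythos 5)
Your overall architecture is the same as the paper's: reduce via $x$-regularity to real Puiseux series along $y=\pm t^{N}$, obtain the lower bound by perturbing the common real roots $\beta_j$, and obtain the upper bound by showing that the value $\ell(\phi)$ of an arbitrary real series is dominated by the listed candidates, using Lemmas~\ref{Lemma22} and~\ref{Lemma23}. The reduction and the lower bound are fine. The gap lies in the upper bound, where the decisive steps are asserted rather than proved. Your statement that appending a root of the highest-edge polynomial of $\mathbb P(f,\cdot)$ ``keeps $\mathrm{ord}\,f$ growing without raising $\mathrm{ord}\,g$ unless that root is shared'' presupposes two nontrivial facts: that the highest Newton edges of $\mathbb P(f,\phi)$ and $\mathbb P(g,\phi)$ have equal angles, and that the product of their associated polynomials has a single (necessarily real) root. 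These are the paper's Claims~\ref{Claim21} and~\ref{Claim23}; their proofs are not routine bookkeeping but use the hypothesis $\{f=0\}\subset\{g=0\}$ together with the real-approximation device (if $\tan\theta_1>\tan\theta_2$, sliding along $f$ fixes $\mathrm{ord}\,g$, so either a non-real coefficient appears and one lands in $\mathcal V_{\mathbb R}(f)\subset\mathcal V_a(fg)$ with a strictly larger $\ell$, or one produces a real Newton--Puiseux root of $f$ that is not a root of $g$, contradicting the inclusion). Your sketch never carries out this step.

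More seriously, your trichotomy ends the non-terminating branch with ``there $\ell\to m_j/n_j$'', but what the theorem requires is the inequality $\ell(\phi)\leqslant m_j/n_j$ for the \emph{original} series $\phi$, and this does not follow from convergence of $\ell$ along truncations: a priori $\ell$ could decrease at each sliding step, in which case the limit value $m_j/n_j$ gives no bound on $\ell(\phi)$. The paper closes exactly this hole by proving $x_1y_2\geqslant x_2y_1$ for the vertices $B_i=(x_i,y_i)$ of the two highest edges, so that $\mu(t)=(tx_1+y_1)/(tx_2+y_2)$ is nondecreasing, whence $\ell$ does not drop under sliding and $\ell(\phi)=\mu(\tan\theta_1)\leqslant x_1/x_2=m_j/n_j\leqslant\mathcal L^{+}_g(f)$; the proof of this monotonicity is precisely where $x$-regularity of $f$ is used, through a lower compact edge of $\mathbb P(f,\phi)$ through $B_1$ and Lemma~\ref{Lemma23}(ii) (Claim~\ref{Claim24}(i)), and the same monotonicity also underlies your claim that the elements of $\mathcal V_a(fg)$ produced when ``the branch splits'' dominate $\ell(\phi)$ rather than merely the $\ell$-value of the current truncation. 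You explicitly defer ``keeping the monotonicity of the ratios under control'' and ``checking that the listed candidates really dominate $\ell$'' as remaining work, but these are the mathematical core of the theorem, so as it stands the proposal does not constitute a proof.
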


\begin{proof}
We first show that
\begin{equation}\label{one-side}
\mathscr{L}_g(f) \geqslant  \max\left\{\mathcal{L}^+_g(f), \mathcal{L}^-_g(f)\right\}.
\end{equation}
By (\ref{Eqn1}), it is obvious that 
$$\mathscr{L}_g(f) \geqslant \max\left\{\ell(\phi)\ |\ \phi\in\mathcal V_a(fg)\right\}.$$ 
Therefore  we only  need to show that 
\begin{equation}\label{>=mul}
\mathscr{L}_g(f) \geqslant \frac{m_j}{n_j}\ \text{ for all }\ j=1,\ldots,k.
\end{equation}
To do this, fix $j\in\{1,\dots,k\}$ and consider the Newton polygons $\mathbb P(f,\beta_j)$ of $f$ and $\mathbb P(g,\beta_j)$ of $g$ relative to the arc $\beta_j$. 
Let $A_1=(x_1,y_1)$ and $A_2=(x_2,y_2)$ be respectively the vertices of $\mathbb P(f,\beta_j)$ and $\mathbb P(g,\beta_j)$ being closest to the $y$-axis. 
We will show that
\begin{equation*}\label{multiplicity}
x_1=m_j\ \text{ and }\ x_2=n_j.
\end{equation*}
Indeed, %let us prove the first equality, the second one can be proved similarly.
in view of Puiseux's theorem (see, for example \cite[page~98]{Walker1950}), we can write
\begin{equation}\label{Puiseux-f}
f(x,y)=(x-\beta_j(y))^{m_j}h(x,y),
\end{equation} 
where $h(\beta_j(y),y)\neq 0$ for all $j$. So 
$$f(X + \beta_j(Y), Y)=X^{m_j}h(X + \beta_j(Y)Y).$$
This implies $x_1=m_j$ and $y_1=\ord h\left( \beta_j(y),y)\right)$ and similarly $ x_2=n_j$. 

For each positive integer $n$, define a new arc 
$$x=\phi_n(y)=\beta_j(y)+y^n.$$ 
By~\eqref{Puiseux-f}, we have
$$ f(\phi_n(y),y)=y^{n x_1}h(\phi_n(y),y) .$$
For $n$ large enough, we have 
$$y_1=\ord h\left( \beta_j(y),y)\right) =\ord h\left( \phi_n(y),y)\right).$$
So this yields 
$\ord{f(\phi_n(y),y)}=n x_1+y_1.$
By the same way, we also have $\ord{g(\phi_n(y),y)}=n x_2+y_2.$
Consequently,
$$\ell (\phi_n)=\frac{nx_1+y_1}{nx_2+y_2}.$$
Note that 
$$\lim\limits_{n\to \infty} \ell (\phi_n)=\dfrac{x_1}{x_2}=\dfrac{m_j}{n_j},$$ 
so~\eqref{>=mul} follows from~\eqref{Eqn1}.
%\begin{align}
%\mathscr{L}_g(f)\geqslant \dfrac{m_j}{n_j}.
%\end{align}
Therefore, $\mathscr{L}_g(f) \geqslant  \mathcal{L}^+_g(f)$. 
Similarly, one has $\mathscr{L}_g(f) \geqslant  \mathcal{L}^-_g(f)$ and hence the inequality~\eqref{one-side} holds.
Now we need to show that the inequality in~\eqref{one-side} is actually an equality.
%$$\mathscr{L}_g(f) \geqslant  \max\left\{\mathcal{L}^+_g(f), \mathcal{L}^-_g(f)\right\}.$$
%\end{proof}
%\begin{lemma}
%If the \L ojasiewicz exponent $\mathscr{L}_g(f)$ is attained at a given curve, then
%$$\mathscr{L}_g(f) = \max\left\{\mathcal{L}_+(g), \mathcal{L}_-(g)\right\}.$$
%\end{lemma}
%\begin{proof}
%Without loss of generality we may assume that $\mathscr{L}_g(f) = \mathcal{L}_+(g)$. Assmue 

Suppose for contradiction that
 $$\mathscr{L}_g(f) >\max\left\{\mathcal{L}^+_g(f), \mathcal{L}^-_g(f)\right\}.$$
Then there is a real analytic arc $\phi$ passing through the origin and not lying in the $x$-axis such that $g\circ \phi\not\equiv 0$ and  
$$ \ell(\phi)>\max\left\{\mathcal{L}^+_g(f), \mathcal{L}^-_g(f)\right\}.$$
%{ Since $f$ is nonzero and regular in $x$, it follows that such a curve $\phi$ does not lie in the $x$-axis and so its germ at the origin can be parametrized by either $(x = x(t), y = t)$ or $(x = x(t), y = -t),$ where $x(t)$ is an element in $\mathbb{R}\{t^{1/N}\}$ for some positive integer $N$ with $x(0)=0$.}
Note that $\phi$ can be parametrized by either 
$$(x = \phi(t), y = t)\quad \text{ or }\quad (x = \phi(t), y = -t),$$ 
where $\phi(t)$ is an element in $\mathbb{R}\{t^{1/N}\}$ for some positive integer $N$ with $\phi(0)=0$. 
%We denote by $\Gamma_+$ and $ \Gamma_-$ the set of all curves in $\Gamma$ being of form $(x=x(t),y= t)$ and $(x = x(t), y = -t)$, respectively. 
%Without loss of generality we may assume that $\Gamma_+ \neq \emptyset$. Pick $\phi\in\Gamma_+$.
%Let $\mathbb P_1$ and $\mathbb P_2$ be the Newton polygons of $f$ and $g$ relative to $\phi$ respectively. 
Without loss of generality we may assume that $\phi$ can be parametrized by $(x = \phi(t), y = t)$.
Denote by $E_1$ and $E_2$ the highest Newton edges of $\mathbb P(f,\phi)$ and $\mathbb P(g,\phi)$ respectively.
Let $\mathcal{E}_i$ and $\theta_i$ be respectively its associated polynomial and Newton angle. %We first see that, $\tan\theta_1\geqslant\tan \theta_2$. 

\begin{claim}\label{Claim21} 
We have $\tan \theta_1=\tan \theta_2$.
\end{claim}
\begin{proof}
%Arguing by contradiction we suppose that $c\in \mathbb{R}$ is a root of $\mathcal{E}_1(z)$.
Assume for contradiction that, $\tan \theta_1>\tan \theta_2$. Let $\phi_\infty$ be a final result of sliding $\phi$ along $f$. Write 
$$\phi_\infty(y)=\phi(y)+\sum_{i\geqslant 1}a_iy^{\alpha_i},$$
where $a_i\in\mathbb C\setminus\{0\}$, $\tan \theta_1=\alpha_1<\alpha_2<\cdots$. 
We will show that $a_i\in \mathbb R$ for all $ i\geqslant 1$.
In fact, if this is not the case, for each $n\geqslant 0$, define the series 
$$\phi_0(y):=\phi(y),\ \ \ \phi_n(y):=\phi(y)+\sum^{n}_{i= 1}a_i
y^{\alpha_i}\ \text{ for }\ n\geqslant 1,$$
and let $n_0$ be the smallest index such that $a_{n_0}\not\in \mathbb R$. 
Then $n_{0}\geqslant 1$ and 
$$\phi^\mathbb R_{n_0}(y)=\phi_{n_0-1}(y)+c y^{\alpha_{n_0}}
+ \textrm{ higher order terms} ,$$
where $c\in \mathbb R$ is a generic number.  
By applying Lemma \ref{Lemma22}, we obtain 
$$\ord f(\phi^\mathbb R_{n_0}(y),y)=\ord f(\phi_{n_0-1}(y),y)>\cdots>\ord f(\phi(y),y)$$
and
$$\ord g(\phi^\mathbb R_{n_0}(y),y)=\ord g(\phi_{n_0-1}(y),y)=\cdots=\ord g(\phi(y),y).$$
So
$$\ell(\phi^\mathbb R_{n_0})= \ell(\phi_{n_0-1})>\ldots>\ell(\phi)>\mathcal{L}^+_g(f),$$
a contradiction, since $\phi^\mathbb R_{n_0}\in \mathcal V_{\mathbb R}(f)\subset \mathcal V_{\mathbb R}(fg)\subset \mathcal V_a(fg)$. This shows that $a_n\in\mathbb R$ for all $n\geqslant 1$. But then this contradicts to the assumption that  $\{f=0\}\subset \{g=0\}$ in $\mathbb R^2$, hence 
$$\tan \theta_1\leqslant\tan \theta_2.$$

Now assume for contradiction that $\tan \theta_1 <\tan \theta_2$. 
Note that, $\theta_1$ and $\theta_2$ are Newton angles of $\mathbb P(fg,\phi)$. %{ (Need a reference: Minkowski sum of a product, Brieskorn?)}
Then by Lemma \ref{Lemma23}(ii), there exists $\psi\in \mathcal V_a(fg)$ being of the form
$$\psi(y)=\phi(y)+c y^{\tan\theta_1}+ \textrm{ higher order terms},$$
where $c\in \mathbb R$ is a generic number. It follows from Lemma \ref{Lemma22}(i) that
$$\ord f\left(\psi(y),y\right) = \ord f\left(\phi(y),y\right)\quad \text{ and }\quad \ord g\left(\psi(y),y\right)\leqslant\ord g\left(\phi(y),y\right),$$
and hence $\ell(\psi)\geqslant\ell(\phi)>\mathcal{L}^+_g(f)$.
This contradiction finishes the claim.
\end{proof}
\begin{claim}\label{Claim23} 
The polynomial $\mathcal{E}_1 \mathcal{E}_2$ has only one root. 
\end{claim}
\begin{proof}
Assume for contradiction that $\mathcal{E}_1 \mathcal{E}_2$ has two distinct roots. 
By Claim \ref{Claim21}, $\theta_1=\theta_2$, so $\mathcal{E}_1 \mathcal{E}_2$ is the Newton polynomial associated to the highest Newton edge of $\mathbb P(f g,\phi)$ (with Newton angle $\theta_1$). 
Then by Lemma \ref{Lemma23}(i), there exists $\psi\in \mathcal V_a(fg)$ being of the form
$$\psi(y)=\phi(y)+c y^{\tan\theta_1}+\textrm{ higher order terms},$$
where $c\in \mathbb R$ is a generic number. 
Then Lemma \ref{Lemma22}(ii) yields 
$$\ord f\left(\psi(y),y\right)=\ord f\left(\phi(y),y\right)\text{ and }\ord g\left(\psi(y),y\right)=\ord g\left(\phi(y),y\right).$$
Hence $\ell(\phi)= \ell(\psi)$ and so $\ell(\phi)\leqslant\mathcal{L}^+_g(f)$ which contradicts the assumption $\ell(\phi)>\mathcal{L}^+_g(f)$.
\end{proof}
Let $a\in\mathbb R$ be the unique root of the polynomial $\mathcal{E}_1 \mathcal{E}_2$ and let $\widetilde\phi(y):=\phi(y)+ay^{\tan\theta_1}$. We denote by $\widetilde E_1$ and $\widetilde E_2$ the highest Newton edge of $\mathbb P(f,\widetilde\phi)$ and $\mathbb P(g,\widetilde\phi)$ respectively. 
For $i=1,2,$ let $\widetilde\theta_i$ and $\widetilde{\mathcal{E}}_i$ be the Newton angle and the polynomial associated to $\widetilde E_i .$ 
Recall that $E_1$ and $E_2$ are respectively the highest Newton edges of $\mathbb P(f,\phi)$ and $\mathbb P(g,\phi)$.
Let $B_i=(x_i,y_i)$ be the vertex of {$E_i$} which is not contained in the $y$-axis. 
%Note that $x_i=\deg \mathcal{E}_i$.

\begin{claim}\label{Claim24}  If $\widetilde\phi(y)$ is not a
	Newton--Puiseux root of $f$, then the following properties hold:
\begin{enumerate}[{\rm (i)}]
\item $B_i$ is a vertex of $\widetilde E_i$, therefore  $\deg \widetilde{\mathcal{E}}_i=x_i=\deg{\mathcal{E}}_i$.
\item $\tan \widetilde\theta_1=\tan \widetilde\theta_2$.
\item The polynomial $\widetilde{\mathcal{E}}_1 \widetilde{\mathcal{E}}_2$ has only one root. 
\item $\ell(\widetilde\phi)\geqslant\ell(\phi)$. %{(Is it necessary to prove this?)} %>\mathcal{L}^+_g(f)$.%$\widetilde\phi\in \Gamma_+$.
\end{enumerate}
\end{claim}

\begin{proof}
(i) Let us define the function 
$$\mu(t)=\dfrac{tx_1+y_1}{tx_2+y_2}.$$
We first claim that $x_1y_2\geqslant x_2y_1$. In fact, if this is not the case, i.e., $x_1y_2< x_2y_1$, then the function $\mu$ is strictly decreasing and $y_1>0$.
 Since $f$ is regular in $x$, there exists a Newton edge $E$ of $\mathbb P(f,\phi)$ which is different from $E_1$ and has $B_1$ as a vertex. Let $\theta$ be the Newton angle associated to $E$. 
Clearly $\theta<\theta_1=\theta_2$, therefore, by Lemma
\ref{Lemma23}(ii), there exists $\psi\in\mathcal V_a(f)\subset \mathcal V_a(fg)$ such that
$$\psi(y)=\phi(y)+c y^{\tan\theta}+\textrm{ higher order terms},$$
where $c\in \mathbb R$ is a generic number. 
We have 
$$\ord f(\phi(y),y)= x_1\tan\theta_1+y_1\quad \text{ and }\quad \ord g\left(\phi(y),y\right)= x_2\tan\theta_2+y_2=x_2\tan\theta_1+y_2,$$
Moreover, by Lemma \ref{Lemma22}(i) and the choice of the edge $E$,
$$\ord f(\psi(y),y)= x_1\tan\theta+y_1\quad \text{ and }\quad \ord g\left(\psi(y),y\right)\leqslant x_2\tan\theta+y_2.$$
Hence 
$$\ell(\phi)=\mu \tan\theta_1<\mu \tan\theta\leqslant\ell(\psi)\leqslant \mathcal{L}^+_g(f),$$ 
which is a contradiction. 
Hence we must have $x_1y_2\geqslant x_2y_1$, i.e., the function $\mu$ is increasing. 

Let $\widetilde E$ be the edge of $\mathbb P(f,\widetilde\phi)$ such
that $B_1$ is the vertex having larger $x$-coordinate. Let $\widetilde
\theta$ be the Newton angle associated to $\widetilde E$. Since
$\widetilde\phi(y)$ is not a Newton--Puiseux root of $f$, $\widetilde E$ is a compact edge and therefore $\theta_1<\widetilde \theta<\pi/2$. If $\widetilde E$ is not the highest Newton edge of $\mathbb P(f,\widetilde\phi)$, then by Lemma \ref{Lemma23}(ii), there exists $\varphi\in\mathcal V_a(f)\subset \mathcal V_a(fg)$ such that
$$\varphi(y)=\widetilde\phi(y)+c y^{\tan\widetilde\theta}+\textrm{ higher order terms},$$
where $c\in \mathbb R$ is a generic number. It follows from Lemma \ref{Lemma22}(i) that
$$\ord f(\varphi(y),y)= x_1\tan\widetilde\theta+y_1\quad \text{ and }\quad\ord g\left(\varphi(y),y\right)\leqslant x_2\tan\widetilde\theta+y_2.$$
Hence 
$$\ell(\phi)=\mu(\tan\theta_1)\leqslant \mu(\tan\widetilde\theta)\leqslant\ell(\varphi)\leqslant\mathcal{L}^+_g(f).$$ 
This contradiction yields $\widetilde E\equiv\widetilde E_1$, i.e., $B_1$ is a vertex of $\widetilde E_1$. Similarly we can show that $B_2$ is a vertex of $\widetilde E_2$ and hence Item~(i) follows. 

(ii)--(iii) These can be proved by using completely the same argument as in Claims~\ref{Claim21} and~\ref{Claim23}.

(iv) It follows from Items~(ii) and~(iii) that 
$$\ord f(\widetilde\phi(y),y)= x_1\tan\widetilde\theta_1+y_1\quad\text{ and }\quad\ord g(\widetilde\phi(y),y)= x_2\tan\widetilde\theta_2+y_2=x_2\tan\widetilde \theta_1+y_1,$$
i.e.,
$$\ell(\widetilde\phi)=\mu(\tan\widetilde\theta_1)\geqslant \mu(\tan\theta_1)=\ell(\phi).$$
This implies (iv) and hence the claim follows.
\end{proof}
We are now in position to complete the theorem. Applying Claim \ref{Claim24} (possibly infinitely) many times, we obtain $\phi_\infty$ as a final result of sliding of $\phi$ along $f$. This implies that, $x=\phi_\infty(y)$ is a common Newton--Puiseux root of $f$ and $g$ of multiplicities $x_1$ and $x_2$ respectively. Moreover, from the proof Claim~\ref{Claim24}, $x_1y_2\geqslant x_2y_1$, it follows that
$$\ell(\phi)=\dfrac{x_1\tan\theta_1+y_1}{x_2\tan\theta_1+y_2}\leqslant \frac{x_1}{x_2}\leqslant \mathcal L^+_g(f).$$
This contradicts the assumption $\ell(\phi)>\mathcal L^+_g(f)$.
The theorem is proved.
\end{proof}

%%%%%%%%%%%%%%%%%%%%%%%%%%%%%%%%%%%%%%%%%%%%%%%%%%%%%%%%%%%%%%%%%%%%%%%%%%%%%%

%%%%%%%%%%%%%%%%%%%%%%%%%%%%%%%%%%%%%%%%%%%%%%%%%%%%%%%%%%%%%%%%%%%%%%%%%%%%%%
%%%%%%%%%%%%%%%%%%%%%%%%%%%%%%%%%%%%%%%%%%%%%%%%%%%%%%%%%%%%%%%%%%%%%%%%%%%%%%
%%%%%%%%%%%%%%%%%%%%%%%%%%%%%%%%%%%%%%%%%%%%%%%%%%%%%%%%%%%%%%%%%%%%%%%%%%%%%%
\subsection{Second formula for the \L ojasiewicz exponents}

Recall that $\mathcal V_{\mathbb R}(f)$ is the set of real approximations of non-real Newton--Puiseux roots of $f$ as defined in Definition \ref{def21}. Let $\beta_j,\ j=1,\ldots,k,$ be the common real Newton--Puiseux roots of $f$ and $g$ of multiplicity $m_j$ and $n_j$ respectively. 
\begin{theorem}\label{thm32}
Define
$$\mathscr{L}^+_g(f):=\max\left\{\ell(\gamma),\frac{m_j}{n_j}\ \Big|\ \gamma\in \mathcal V_{\mathbb R}(f),\ j=1,\ldots,k \right\}\ \text{ and }\ \mathscr{L}^-_g(f)=\mathscr{L}^+_{\bar g}(\bar f),$$
where $\bar f(x,y):=f(x,-y)$ and $\bar g(x,y):=g(x,-y)$.  
Then the \L ojasiewicz  exponent of $g$ w.r.t $f$ is given by
$$\mathscr{L}_g(f) = \max\left\{\mathscr{L}^+_g(f), \mathscr{L}^-_g(f) \right\}.$$
\end{theorem}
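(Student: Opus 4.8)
The plan is to deduce Theorem~\ref{thm32} from Theorem~\ref{thm31} by showing that replacing the set $\mathcal V_a(fg)$ by the (generally smaller) set $\mathcal V_{\mathbb R}(f)$ does not change the value of the max; more precisely, I would prove
$$\max\left\{\ell(\phi),\tfrac{m_j}{n_j}\ \Big|\ \phi\in\mathcal V_a(fg),\ j=1,\dots,k\right\}
=\max\left\{\ell(\gamma),\tfrac{m_j}{n_j}\ \Big|\ \gamma\in\mathcal V_{\mathbb R}(f),\ j=1,\dots,k\right\},$$
and the analogous identity with $f,g$ replaced by $\bar f,\bar g$; the theorem then follows immediately. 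Since $\mathcal V_{\mathbb R}(f)\subset\mathcal V_{\mathbb R}(fg)\subset\mathcal V_a(fg)$, the inequality $\geqslant$ is for free (modulo checking that $\ell(\gamma)$ is unchanged when $\gamma$ is regarded as an approximation coming from $fg$ rather than from $f$, which is true because $\ell$ only depends on the germ of the arc). The content is the reverse inequality: every $\phi\in\mathcal V_a(fg)$ must be ``dominated'' either by some element of $\mathcal V_{\mathbb R}(f)$ or by one of the ratios $m_j/n_j$.

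First I would recall that, by the one-sided inequality~\eqref{one-side} already established (with $\mathcal L^+_g(f)$ on the right), and by Theorem~\ref{thm31} itself, we already know $\mathscr L_g(f)=\max\{\mathcal L^+_g(f),\mathcal L^-_g(f)\}$; so it suffices to prove $\mathscr L_g(f)\leqslant\max\{\mathscr L^+_g(f),\mathscr L^-_g(f)\}$, i.e. that the right-hand side of Theorem~\ref{thm32} is an upper bound for $\mathscr L_g(f)$. By~\eqref{Eqn1} this amounts to: for every real analytic arc $\phi$ not contained in $\{y=0\}$ with $g\circ\phi\not\equiv 0$, one has $\ell(\phi)\leqslant\max\{\mathscr L^+_g(f),\mathscr L^-_g(f)\}$. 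Here I would reuse verbatim the sliding machinery from the proof of Theorem~\ref{thm31}: assuming WLOG $\phi$ is parametrized by $(x=\phi(t),y=t)$, run the argument of Claims~\ref{Claim21}--\ref{Claim24} to produce a final result $\phi_\infty$ of sliding $\phi$ along $f$, which is a common Newton--Puiseux root of $f$ and $g$ of multiplicities $x_1,x_2$, with $\ell(\phi)\leqslant x_1/x_2$. Two cases arise. If $\phi_\infty$ is a \emph{real} Newton--Puiseux root of $f$, then it is one of the $\beta_j$, so $x_1/x_2=m_j/n_j$ for that $j$ and we are done. If $\phi_\infty$ is \emph{non-real}, then I must replace, at the first stage of the sliding where a non-real coefficient would be forced, the generic real coefficient $c$ by the construction of the real approximation: by Lemma~\ref{Lemma23}(iii) there is $\psi\in\mathcal V_{\mathbb R}(f)$ of the form $\phi(y)+cy^{\tan\theta_1}+\cdots$ with $c$ generic real, and by Lemma~\ref{Lemma22}(i) this $\psi$ satisfies $\ord f(\psi(y),y)=\ord f(\phi(y),y)$ and $\ord g(\psi(y),y)\leqslant\ord g(\phi(y),y)$, hence $\ell(\psi)\geqslant\ell(\phi)$, and $\psi\in\mathcal V_{\mathbb R}(f)$ gives $\ell(\phi)\leqslant\mathscr L^+_g(f)$.

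The point that needs genuine care — and which I expect to be the main obstacle — is the bookkeeping in that non-real case: the argument of Claim~\ref{Claim24} shows only that $x_1/x_2$ bounds $\ell(\phi)$ once we have run the \emph{full} sliding to a Puiseux root of $f$, but if that root is non-real I cannot keep $\phi$ real all the way. So I would instead argue by a minimal-counterexample/induction on the number of sliding steps: take $\phi$ with $\ell(\phi)>\max\{\mathscr L^+_g(f),\mathscr L^-_g(f)\}$; as in Claims~\ref{Claim21}--\ref{Claim23}, $\tan\theta_1=\tan\theta_2$ and $\mathcal E_1\mathcal E_2$ has a single root $a$. If $a\notin\mathbb R$, then the real approximation $\widetilde\phi(y)=\phi(y)+cy^{\tan\theta_1}+\cdots$ (generic real $c$) lies in $\mathcal V_{\mathbb R}(f)$ by Lemma~\ref{Lemma23}(iii) and, by Lemma~\ref{Lemma22}(i), $\ell(\widetilde\phi)\geqslant\ell(\phi)>\mathscr L^+_g(f)$, contradicting $\widetilde\phi\in\mathcal V_{\mathbb R}(f)$. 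If $a\in\mathbb R$, set $\widetilde\phi(y)=\phi(y)+ay^{\tan\theta_1}$; Claim~\ref{Claim24} gives $\ell(\widetilde\phi)\geqslant\ell(\phi)$ and $\widetilde\phi$ requires strictly fewer sliding steps to reach $\phi_\infty$, so by induction $\ell(\widetilde\phi)\leqslant\max\{\mathscr L^+_g(f),\mathscr L^-_g(f)\}$ unless $\widetilde\phi$ is already a Newton--Puiseux root of $f$, in which case (if real) it is a $\beta_j$ and $\ell(\phi)\leqslant m_j/n_j$, while (if non-real) we apply the previous paragraph to $\widetilde\phi$ directly. In every branch we reach a contradiction, proving the bound. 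The remaining details — that $\ell$ is insensitive to whether an approximating series is obtained via $f$ or $fg$, that the real-approximation exponent $\rho$ in Definition~\ref{def21} agrees with $\tan\theta_1$ at the relevant stage so that Lemma~\ref{Lemma23}(iii) applies, and the symmetric treatment of the $y=-t$ parametrization giving the $\mathscr L^-_g(f)$ term — are routine and mirror the corresponding passages in the proof of Theorem~\ref{thm31}.
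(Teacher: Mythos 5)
Your overall strategy (slide $\phi$ along $f$, use real approximations to stay in $\mathcal V_{\mathbb R}(f)$, and end at a common real root giving a ratio $m_j/n_j$) is the same as the paper's, but there is a genuine gap at the step where you propose to ``reuse verbatim'' Claims~\ref{Claim21}--\ref{Claim24}. Those claims are proved under the hypothesis $\ell(\phi)>\mathcal{L}^+_g(f)$, and every contradiction in their proofs is obtained by exhibiting a series $\psi\in\mathcal V_a(fg)$ (or $\mathcal V_a(f)\subset\mathcal V_a(fg)$) with $\ell(\psi)\geqslant\ell(\phi)$. Under your weaker hypothesis $\ell(\phi)>\max\{\mathscr{L}^+_g(f),\mathscr{L}^-_g(f)\}$, where the bound only controls $\ell$ on $\mathcal V_{\mathbb R}(f)$, producing such a $\psi\in\mathcal V_a(fg)$ is no contradiction at all; so ``$\tan\theta_1=\tan\theta_2$'', ``$\mathcal E_1\mathcal E_2$ has a single root'', the vertex-preservation statement, and $\ell(\widetilde\phi)\geqslant\ell(\phi)$ are all unproven in your setting. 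The paper replaces these membership-based contradictions by two extremal choices that your proposal lacks: first, it takes $\phi$ to \emph{attain} $\mathscr{L}_g(f)$ (possible precisely because Theorem~\ref{thm31} is already proved, so the supremum in \eqref{Eqn1} is realized by an element of $\mathcal V_a(fg)$), whence \emph{any} perturbed real arc $\psi$ satisfies $\ell(\psi)\leqslant\ell(\phi)$ and strict improvements are absurd (Claims~\ref{Claim26}, \ref{Claim29}, \ref{Claim32}); second, among such maximizers it chooses $\phi$ with $\deg\mathcal E_\phi$ minimal, which is what forces the single-root property in Claim~\ref{Claim29}(ii) via the decomposition of $\mathcal E_2$ along the roots of $\mathcal E_1$ and the equality $x_1y_2=x_2y_1$. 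None of this is recovered by your argument.

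A second, related problem is your induction ``on the number of sliding steps'': the sliding may require infinitely many steps (the limit $\phi_\infty$ is in general an infinite Puiseux series), so this is not a well-founded induction measure. The paper handles the limit differently: Claim~\ref{Claim29}(i) upgrades the monotonicity $\ell(\widetilde\phi)\geqslant\ell(\phi)$ you rely on to the \emph{equality} $x_1y_2=x_2y_1$, so that $\ell$ is constant and the degrees $x_1,x_2$ are preserved along the entire (possibly infinite) sliding (Claim~\ref{Claim33}); only then can one conclude that $\phi_\infty$ is a common root of $f$ and $g$ of multiplicities exactly $x_1,x_2$ with $\ell(\phi)=x_1/x_2=m_j/n_j$. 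Also note a small structural slip: for real $f,g$ the unique root $a$ of $\mathcal E_1\mathcal E_2$ can never be non-real (non-real roots of a real polynomial come in conjugate pairs), so your case split ``$a\notin\mathbb R$'' is vacuous; the correct statement, proved \emph{before} knowing there is a single root, is the paper's Claim~\ref{Claim27} that $\mathcal E_1$ has only real roots, and it is this claim (applied at every stage) that keeps the sliding real and lands $\phi_\infty$ among the $\beta_j$.
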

\begin{proof}
Since $\mathcal V_{\mathbb R}(f)\subset\mathcal V_a(f g)$ and $\mathcal V_{\mathbb R}(\bar f)\subset\mathcal V_a(\bar f \bar g)$,
by Theorem \ref{thm31},
$$\mathscr{L}_g(f) =\max\left\{\mathcal{L}^+_g(f), \mathcal{L}^-_g(f)\right\}\geqslant\max\left\{\mathscr{L}^+_g(f), \mathscr{L}^-_g(f)\right\}.$$
%Without loss of generality, suppose that $\mathscr{L}_g(f) =\mathcal{L}^+_g(f).$
Arguing by contradiction, we assume that
 $$\mathscr{L}_g(f) >\max\left\{\mathscr{L}^+_g(f), \mathscr{L}^-_g(f)\right\}.$$
It follows from Theorem \ref{thm31} that there is an analytic arc $\phi$ passing through the origin and not lying in the $x$-axis such that $g\circ \phi\not\equiv 0$ and 
$$\mathscr{L}_g(f) = \ell(\phi)>\max\left\{\mathscr{L}^+_g(f), \mathscr{L}^-_g(f)\right\}.$$
Note that $\phi$ can be parametrized by either 
$$(x = \phi(t), y = t)\quad \text{ or }\quad (x = \phi(t), y = -t),$$ 
where $\phi(t)$ is an element in $\mathbb{R}\{t^{1/N}\}$ for some positive integer number $N$ with $\phi(0)=0$. 
Let $\mathcal E_{\phi}$ be the polynomial associated to the highest Newton edge of $\mathbb P(f,\phi)$.
With no loss of generality, we can assume that $\phi$ has the following property:

{\it For any analytic arc $\widetilde\phi$ passing through the origin not lying in the $x$-axis and having the parametrization
$(x = \widetilde\phi(t), y = t)$
 such that $g\circ \widetilde\phi\not\equiv 0$ and 
$$\mathscr{L}_g(f) = \ell(\widetilde\phi)>\max\left\{\mathscr{L}^+_g(f), \mathscr{L}^-_g(f)\right\},$$
if $\mathcal E_{\widetilde\phi}$ is the polynomial associated to the highest Newton edge of $\mathbb P(f,\widetilde\phi)$, then $\deg\mathcal E_{\widetilde\phi}\geqslant \deg\mathcal E_{\phi}$.
}

Indeed, if there is an analytic arc $\widetilde\phi$ such that this property does not hold, i.e., $\deg\mathcal E_{\widetilde\phi}< \deg\mathcal E_{\phi}$, then it is enough to replace $\phi$ by $\widetilde\phi$ and repeat the process until the property is satisfied.

%We denote by $\bar\Gamma_+, \bar\Gamma_-$ the set of all curves in $\bar\Gamma$ being of the form $(x(t), t)$ and $(x = x(t), y = -t)$ respectively. 
%Without loss of generality we may assume that $\Gamma_+ \neq \emptyset$. 
%Let $\gamma\in\bar\Gamma_+$ such that $\deg\mathcal E_{\gamma}\leqslant \deg\mathcal E_{\phi}$ for all $\phi\in\bar\Gamma_+$, here for each curve $\phi\in\bar\Gamma_+$, $\mathcal E_{\phi}$ denotes the polynomial associated to the highest Newton edge of $\mathbb P(g,\phi)$. 
Let $E_1$ and $E_2$ be the highest Newton edges of $\mathbb P(f,\phi)$ and $\mathbb P(g,\phi)$ respectively. 
For each $i=1,2$, let $\mathcal{E}_i$ and $\theta_i$ be the associated polynomial and the Newton angle of $E_i$ respectively. 
Let $B_i=(x_i,y_i)$ be the vertex of $E_i$ which is not contained in the $y$-axis. %We first see that, $\tan\theta_1\geqslant\tan \theta_2$. 
Then the following statement holds.
%\begin{claim}\label{Claim25} 
%We have $\tan \theta_1=\tan \theta_2$.
%\end{claim}
 \begin{claim}\label{Claim26} 
We have $\tan \theta_1=\tan \theta_2$.
\end{claim}
\begin{proof}
Applying the same argument as in the proof of Claim \ref{Claim21}, we get $\tan \theta_1\leqslant\tan \theta_2.$
%{( The role of $E_1$ and $E_2$ has been changed here? comparing to Claim \ref{Claim21})}
Assume for contradiction that  $\tan \theta_1<\tan \theta_2$. Let 
$$\psi(y)=\phi(y)+c y^{\tan\theta_1}$$
 with a generic number $c$. It follows from Lemma \ref{Lemma22} that
$$\ord f(\psi(y),y)= x_1\tan\theta_1+y_1= \ord f(\phi(y),y)$$
and
 $$\ord g(\psi(y),y)\leqslant x_2\tan\theta_1+y_2<x_2\tan\theta_2+y_2= \ord g(\phi(y),y).$$
These imply
$$\ell(\psi)> \ell(\phi)=\mathscr{L}_g(f),$$
which is a contradiction. 
\end{proof}
\begin{claim}\label{Claim27} 
The polynomial $\mathcal{E}_1$ has only real roots.
\end{claim}
\begin{proof}
Assume for contradiction that $a\not\in \mathbb R$ is a root of $\mathcal{E}_1$. 
It follows from Lemma \ref{Lemma23}(iii) that there exists $\psi\in\mathcal V_{\mathbb R}(f)$ of the form
$$\psi=\phi+ cy^{\tan \theta_1}+\text{higher order terms}$$
with a generic real number $c$.  Applying Lemma \ref{Lemma22}(i) we obtain
$$\ord f(\psi(y),y)=\ord f(\phi(y),y)\quad\text{ and }\quad\ord g(\psi(y),y)=\ord g(\phi(y),y).$$
Therefore $$\ell(\psi)=\ell(\phi)>\mathscr{L}^+_g(f),$$
which contradicts the definition of $\mathscr{L}^+_g(f)$.
\end{proof}
%%\begin{claim}\label{Claim28} 
%%We have $x_1y_2\geqslant x_2y_1$.
%%\end{claim}
%\begin{proof}
%Indeed, if this is not the case, i.e., $x_1y_2< x_2y_1$. Then the function
%$$\mu(t):=\frac{tx_1+y_1}{tx_2+y_2}$$
%is strictly decreasing. Let $\psi(y)=\phi(y)+y^t$ for some $t<\tan\theta_1\leqslant \tan\theta_2$ closed to $\theta_1$. Then
%$$\ell(\psi)=\mu(t)>\mu(\tan\theta_1)\geqslant \ell(\phi)=\mathscr{L}_g(f),$$
%a contradiction. 
%\end{proof}
\begin{claim}\label{Claim29} 
We have
\begin{enumerate}[{\rm (i)}]
\item $x_1y_2= x_2y_1$, and therefore $\ell(\phi)=\dfrac{x_1}{x_2}=\dfrac{t x_1+y_1}{t x_2+y_2}$ for all $t.$
\item The polynomial $\mathcal{E}_1 \mathcal{E}_2$ has only one root.
\end{enumerate}
\end{claim}
\begin{proof}
(i) First of all, let us prove 
\begin{equation}\label{1221}
x_1y_2\geqslant x_2y_1.
\end{equation}
Indeed, if this is not the case, i.e., $x_1y_2< x_2y_1$, then the function
$$\mu(t):=\frac{tx_1+y_1}{tx_2+y_2}$$
is strictly decreasing. Let $\psi(y)=\phi(y)+y^\rho$ with $0<\rho<\tan\theta_1$ closed enough to $\theta_1(=\theta_2$ by Claim~\ref{Claim26}) so that $\arctan \rho$ is larger than the other Newton angles of $\mathbb P(f,\phi)$ and $\mathbb P(g,\phi)$. Then
$$\ord f(\psi(y),y)= x_1\rho+y_1\quad \text{ and }\quad \ord g(\psi(y),y)= x_2\rho+y_2.$$
So we get
$$\ell(\psi)=\mu(\rho)>\mu \tan\theta_1=  \ell(\phi)=\mathscr{L}_g(f),$$
which contradicts the definition of $\mathscr{L}_g(f)$.
Hence $x_1y_2\geqslant x_2y_1.$ Let us now prove that the equality always holds.

Let $c_j\in\mathbb R, j=1,\ldots,q,$ be the roots of $\mathcal{E}_1(z)$ of multiplicity $x^j_1$ with $x^j_1>0$ and $q\geqslant 1$. We write
$$\mathcal{E}_2(z)=a(z)\prod^q_{j=1}(z-c_j)^{x^j_2}$$
with $x^j_2\geqslant 0$ and $a(c_j)\neq 0$. Observe that 
\begin{equation}\label{deg}
\sum^q_{j=1}x^j_1=\deg \mathcal{E}_1=x_1\quad\text{ and }\quad \sum^q_{j=1}x^j_2+\deg a(z) =\deg \mathcal{E}_2=x_2.
\end{equation}
Let us denote by $A^j_i=(x^j_i,y^j_i)$ the intersection of the line $\{x=x^j_i\}$ with the edge $E_i$ for each $i=1,2$. 
Set $$\mu_{j}(t):=\frac{tx^j_1+y^j_1}{tx^j_2+y^j_2}.$$ 
Since $A^j_i\in E_i$, it follows that
\begin{align}\label{equa22}
\mu_{j} \tan\theta_1=\ell(\phi)\quad\text{ and }\quad y^j_i=y_i+(x_i-x^j_i)\tan\theta_1,
\end{align}
for all $ i=1,2,\ j=1\ldots,q$. We also notice that $A^j_1$ is a vertex of the Newton
polygon $\mathbb P(f,\widetilde\phi_j)$ with
$\widetilde{\phi}_j(y):=\phi(y)+c_j y^{\theta_1}$. 
We shall show that 
\begin{equation}\label{IE}
x^j_1 y^j_2\leqslant x^j_2y^j_1\quad \text{for all } j=1,\ldots,q.
\end{equation}
In fact, by contradiction, assume that $x^j_1 y^j_2>x^j_2y^j_1$, i.e., the function $\mu_{j}(t)$ is strictly increasing. 
From this and~\eqref{equa22}, for  $\rho>\tan\theta_1$ sufficiently closed to $\tan\theta_1$, we have
$$\mathscr{L}_g(f)=\ell(\phi)=\mu_{j} \tan\theta_1 <\mu_{j}\rho=\ell\left(\widetilde{\phi}_j(y)+cy^\rho\right)\leqslant\mathscr{L}_g(f)$$
  for every non-zero $c\in\mathbb R$, which is clearly a contradiction. Thus~\eqref{IE} must holds.
Combining~\eqref{equa22} and~\eqref{IE} yields
$$x^j_1 [y_2+\tan\theta_1(x_2-x^j_2)]\leqslant x^j_2[y_1+\tan\theta_1(x_1-x^j_1)]\quad \text{for all } j=1,\ldots,q.$$
Summing up we obtain
$$(y_2+x_2\tan\theta_1)\sum^q_{j=1}x^j_1\leqslant (y_1+x_1\tan\theta_1)\sum^q_{j=1}x^j_2.$$
Combining this with~\eqref{deg}, we get
$$(y_2+x_2\tan\theta_1)x_1\leqslant(y_1+x_1\tan\theta_1)(x_2-\deg a(z))\leqslant(y_1+x_1\tan\theta_1)x_2.$$
Equivalently
$$x_1y_2\leqslant x_2y_1.$$
By this and~\eqref{1221}, we have $x_1y_2=x_2y_1$ and Item~(i) follows.

(ii) By Item~(i), it follows that $x^j_1 y^j_2=x^j_2y^j_1$ for all $j=1,\dots,q$ and $\deg a(z)=0$.
Hence the function $\mu_{j}(t)$ is constant.
Consider, for each $j$, the curve $\psi_j(y)=\widetilde \phi_j+y^\rho$ for some $\rho>\tan\theta_1$ sufficiently closed to $\tan\theta_1$. Then
$$\ell(\psi_j)=\mu_{j}(\rho)=\mu_{j}\tan\theta_1=\ell(\phi).$$
Moreover, it is not hard to check that $A^j_1$ is a vertex of the Newton polygon $\mathbb P(f,\psi_j)$.
So $\deg \mathcal E_{\psi_j}=x_1^j$ where $\mathcal E_{\psi_j}$ is the polynomial associated to the highest Newton edge of $\mathbb P(f,\psi_j)$.
Then it follows from the choice of $\phi$ that $x^j_1\geqslant x_1$. 
This implies $q=1$ and therefore, by the fact that $\deg a(z)=0$, the polynomial $\mathcal E_1 \mathcal E_2$ must have only one root. The claim is proved.
\end{proof}
Let $a\in\mathbb R$ be the unique root of the polynomial $\mathcal{E}_1 \mathcal{E}_2$ and let $\widetilde\phi(y):=\phi(y)+ a y^{\tan\theta_1}$. Let $\widetilde{\mathbb P}_1:=\mathbb P(f,\widetilde\phi)$ and $\widetilde{\mathbb P}_2:=\mathbb P(g,\widetilde\phi)$. We denote by $\widetilde E_i$ the Newton edge of $\widetilde{\mathbb P}_i$ containing $B_i$ as the vertex with the larger $x$-coordinate. Let $\widetilde\theta_i$ and $\widetilde{\mathcal{E}}_i$ be the Newton angle and the polynomial associated to $\widetilde E_i$ respectively. % Let $B_i(x_i,y_i)$ be the vertex of $E_i$ which is not contained in the $y$-axis $Oy$. Note that $x_i=\deg \mathcal{E}_i$.
%\begin{claim}\label{Claim31}  We have $\tan \widetilde\theta_1\leqslant\tan \widetilde\theta_2$.
%\end{claim}
%\begin{proof}
%%Arguing by contradiction we suppose that $c\in \mathbb{R}$ is a root of $\mathcal{E}_1(z)$.
%We assume by contradiction that, $\tan \widetilde\theta_1>\tan \widetilde\theta_2$. Consider the curve $x=\psi(y)=\widetilde\phi+cy^{\tan\widetilde\theta_1}$ with a generic number $c$. It then follows from               Lemma \ref{Lemma22} that, for any $B_2\neq (a,b)\in \widetilde E_2$,
%$$\ord f(\psi(y),y)\leqslant a\tan\widetilde\theta_1+b<x_2\tan\widetilde\theta_1+y_2\text{ and }\ord g(\psi(y),y)=x_1\tan\widetilde\theta_1+y_1.$$
%Therefore $\ell(\psi)>\ell(\phi)=\mathscr{L}_g(f)$, a contradiction. 
%\end{proof}
\begin{claim}\label{Claim32}  We have $\tan \widetilde\theta_1=\tan \widetilde\theta_2$.
\end{claim}
\begin{proof}
%First of all, let us show that 
%$$\tan \widetilde\theta_1\leqslant\tan \widetilde\theta_2.$$
Assume for contradiction that $\tan \widetilde\theta_1>\tan \widetilde\theta_2$. Consider the curve 
$$\psi(y)=\widetilde\phi+cy^{\tan\widetilde\theta_1}$$ 
with a generic number $c$. 
Then it follows from Lemma~\ref{Lemma22}(i) that, for any $(u,v)\in \widetilde E_2$ such that $(u,v)\ne B_2$, we have
$$\ord f(\psi(y),y)=x_1\tan\widetilde\theta_1+y_1$$
and
$$\ord g(\psi(y),y)\leqslant u\tan\widetilde\theta_1+v<x_2\tan\widetilde\theta_1+y_2.$$
Therefore 
$$\ell(\psi)>\dfrac{x_1\tan\widetilde\theta_1+y_1}{x_2\tan\widetilde\theta_1+y_2}=\dfrac{x_1\tan\theta_1+y_1}{x_2\tan\theta_1+y_2}=\ell(\phi)=\mathscr{L}_g(f),$$
where the first equality follows from~Claim~\ref{Claim29}(i).
This is a contradiction.

Now, by contradiction, suppose that $\tan \widetilde\theta_1<\tan \widetilde\theta_2$. 
%Let us show that $\widetilde\theta_1$ is not a Newton angle of $\widetilde{\mathbb P}_2$. 
%Indeed, assume for contradiction that $\widetilde\theta_1$ is not a Newton angle associated to {a} Newton edge $\widetilde E$ of $\widetilde{\mathbb P}_2$.
%Let $\widetilde B=(a,b)$ be the point of $\widetilde E$ with the largest $x$-coordinate. Then the function
%$$\mu(t):=\frac{tx_1+y_1}{ta+b}$$
%is strictly decreasing and $\mu(\tan \widetilde\theta_1)=\ell(\phi)=\mathscr{L}_g(f)$. 
%Let $\psi(y)=\phi(y)+y^\rho$ for some $\rho<\tan\theta_1$ sufficiently closed to $\tan\theta_1$. 
%Then by Lemma \ref{Lemma22}
%$$\ell(\psi)=\mu(\rho)>\mu(\tan\theta_1)= \ell(\phi)=\mathscr{L}_g(f),$$
%which is a contradiction. Therefore $\widetilde\theta_1$ is not a Newton angle of $\widetilde{\mathbb P}_2$. 
Let us show that the polynomial  $\widetilde{\mathcal{E}}_1$ has only real root. In fact, if this is not the case,  then by Lemma \ref{Lemma23}(iii), there exists $\psi\in \mathcal V_\mathbb R(f)$ of the form
$$\psi(y)=\widetilde\phi+cy^{\tan\widetilde\theta_1}$$ with a generic number $c$. It then follows from Lemma \ref{Lemma22}(i) that 
$$\ord f(\psi(y),y)= x_1\tan\widetilde\theta_1+y_1\quad\text{ and }\quad\ord g(\psi(y),y)\leqslant x_2\tan\widetilde\theta_1+y_2.$$
Therefore, in view of Claim~\ref{Claim29}(i), 
$$\ell(\psi)\geqslant\dfrac{x_1\tan\widetilde\theta_1+y_1}{x_2\tan\widetilde\theta_1+y_2}=\dfrac{x_1\tan\theta_1+y_1}{x_2\tan\theta_1+y_2}= \ell(\phi)=\mathscr{L}_g(f),$$ 
which is a contradiction, because $\psi\in\mathcal V_\mathbb R(f)$. 

We now take $0\neq a\in \mathbb R$ such that $\widetilde{\mathcal{E}}_1(a)=0$ and define $\gamma(y)=\widetilde\phi+ay^{\tan\widetilde\theta_1}$. Then it follows from Lemma \ref{Lemma22}(i) that
$$\ord f(\gamma(y),y)>x_1\tan\widetilde\theta_1+y_1\quad\text{ and }\quad\ord g(\gamma(y),y)\leqslant x_2\tan\widetilde\theta_1+y_2.$$
Therefore 
$$\ell(\gamma)>\dfrac{x_1\tan\widetilde\theta_1+y_1}{x_2\tan\widetilde\theta_1+y_2}=\ell(\phi)=\mathscr{L}_g(f),$$ 
a contradiction. Hence $\tan \widetilde\theta_1=\tan \widetilde\theta_2$.
\end{proof}
\begin{claim}\label{Claim33}  If $\widetilde\phi(y)$ is not a Newton--Puiseux root of $f$, then it and the Newton polygons of $f$ and $g$ relative to it share the following properties with that of $\phi$:
\begin{itemize}
\item[(i)] $\tan \widetilde\theta_1=\tan \widetilde\theta_2$.
\item[(ii)] The polynomial $\widetilde{\mathcal{E}}_1 \widetilde{\mathcal{E}}_2$ has only one root. In particular, for each $i=1,2$, $\widetilde E_i$ is the highest Newton edge of $\widetilde{\mathbb P}_i$.
\item[(iii)]$\ell(\widetilde\phi)=\dfrac{x_1}{x_2}=\ell(\phi)$.
\end{itemize}
\end{claim}
\begin{proof}
%Arguing by contradiction we suppose that $c\in \mathbb{R}$ is a root of $\mathcal{E}_1(z)$.
It is clear that Item~(i) follows from Claim \ref{Claim32}.
Furthermore, Items~(ii) and~(iii) can be proved by using the same argument as in the proof of Claim \ref{Claim29}.
\end{proof}
We are now in position to complete the theorem. Applying Claim
\ref{Claim33} (possibly infinitely) many times, we obtain a final
result $\phi_\infty$ of sliding of $\phi$ along $f$ which
is also that of $g$. This implies that, $\phi_\infty$ is
a common Newton--Puiseux root of $f$ and $g$ of multiplicities
$x_1$ and $x_2$ respectively. Therefore,
$$\ell(\phi)=\frac{x_1}{x_2}\leqslant \mathscr L^+_g(f).$$
This contradicts the assumption that $\ell(\phi)>\mathscr L^+_g(f)$. Hence the theorem follows.
\end{proof}
\section{Algorithms}
In this section we provide an algorithm verifying whether $\{f=0\}\subset \{g=0\}$ and computing the Lojasiewicz {exponent} $\mathscr L_g(f)$ if it is defined. Let $f\in \Bbb R[x,y]$ be regular in $x$ and let $\mathcal V(f)$ be the set of all {Newton--Puiseux} roots $x=\gamma(y)$ of $f$.  Let $x=\varphi(y)$ be a (complex) Puiseux series. The {\em contact order} of $\varphi$ and $f$ is defined as
$$\rho(\varphi,f):=\max\{\mathrm{ord}\left(\varphi(y)-\gamma(y)\right)\mid \varphi\neq \gamma\in \mathcal V(f)\}.$$
For each rational number $q$, the series $\varphi$ is called a {Newton--Puiseux} root$\mod q+$ of $f$ if there exists $\gamma\in \mathcal V(f)$ such that $\mathrm{ord}\left(\varphi(y)-\gamma(y)\right)>q$.
Assume that
 $$x=\gamma(y)=\sum c_\alpha y^\alpha$$
is a {Newton--Puiseux} root of $f$, then the series
$$\tilde\gamma(y)=\sum_{\alpha\leq \rho} c_\alpha y^\alpha,$$
where $\rho=\rho(\gamma,f)$, is called {\em a truncated {Newton--Puiseux} root} of $f$. We denote by $\tilde{\mathcal V}(f)$ the set of  truncated {Newton--Puiseux} roots $f$. 
\begin{remark}\label{rm41}
{It follows from the definition that:}	
\begin{itemize}
\item[(i)] If $\gamma(y)$ and $\gamma'(y)$ are distinct {Newton--Puiseux} roots of $f$, then $\tilde\gamma\neq \tilde\gamma'$. That is, the natural map ${\mathcal V}(f)\to \tilde{\mathcal V}(f)$ is bijective.
\item[(ii)] If $\gamma(y)$ is a {Newton--Puiseux} root of $f$ then
$$\mathrm{ord}\left(\tilde\gamma(y)-\gamma(y)\right)>\rho(\gamma,f):=\max\{\mathrm{ord}\left({\gamma}(y)-\gamma'(y)\right)\mid \gamma\neq \gamma'\in \mathcal V(f)\}.$$
\end{itemize}
\end{remark}
\begin{theorem}
Let $f\in \Bbb R[x,y]$ be regular in $x$ and let $x=\gamma(y)$ be a {Newton--Puiseux} root of $f$. Let {$\rho=\rho(\gamma,f)$} the contact order of $\gamma$ and $f$. Then
\begin{itemize}
\item[(i)] If the truncated {Newton--Puiseux} root $\tilde\gamma$ of $\gamma$ is real, then $\gamma$ is a real {Newton--Puiseux}  root of $f$.
\item[(ii)] We write
$$f(X + \tilde\gamma(y), Y) = \sum c_{ij}X^iY^{j/N}.$$
Then {the multiplicity of $\gamma$, denoted by} $\mathrm{mult}_{\gamma}f$, is equal to the minimum of $i$ such that
\begin{align}\label{mult}
i\rho+j/N=\mathrm{ord}(f(\tilde\gamma_\rho(y),y))\text{ and } c_{ij}\neq 0,
\end{align}
where $\tilde\gamma_\rho$ is the $\rho$-approximation of $\tilde\gamma$.
\item[(iii)] Let $g\in \Bbb R[x,y]$ be regular in $x$ and let $h:=\gcd(f,g)$. If $\tilde\gamma$ is a root $\mod {\rho(\gamma,f)+}$ of $h$ then $\gamma(y)$ is also a root of $g$.
\end{itemize}
\end{theorem}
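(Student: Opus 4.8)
The plan is to treat the three items in turn, using throughout the Puiseux factorization $f(x,y)=u(x,y)\prod_{k}(x-\gamma_k(y))$, where $u$ is a unit and the $\gamma_k$ are the Newton--Puiseux roots of $f$ listed with multiplicity, together with the fact that, since $f\in\mathbb R[x,y]$, the complex conjugate $\bar\gamma_k$ of a root is again a root. For item~(i) I would argue by contradiction: assume $\gamma=\sum_\alpha c_\alpha y^\alpha$ is not real and let $\alpha^\ast$ be the smallest exponent with $c_{\alpha^\ast}\notin\mathbb R$. Since $\tilde\gamma$ retains precisely the terms of $\gamma$ with exponent $\le\rho$ and is assumed real, we must have $\alpha^\ast>\rho$. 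But then the conjugate series $\bar\gamma$ is a Newton--Puiseux root of $f$ distinct from $\gamma$ with $\mathrm{ord}(\gamma-\bar\gamma)=\alpha^\ast>\rho=\rho(\gamma,f)$, contradicting the definition of the contact order. Hence $\gamma$ is real.

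For item~(ii) I would put $m:=\#\{k:\gamma_k=\gamma\}=\mathrm{mult}_\gamma f$, $\delta:=\gamma-\tilde\gamma$ (so $\mathrm{ord}\,\delta>\rho$ by Remark~\ref{rm41}(ii)), and $y_1:=\sum_{\gamma_k\ne\gamma}\mathrm{ord}(\gamma-\gamma_k)$. The first step is to show $\mathrm{ord}\,f(\tilde\gamma_\rho(y),y)=m\rho+y_1$: for $\gamma_k\ne\gamma$ one has $\mathrm{ord}(\gamma-\gamma_k)\le\rho$ and, using the genericity of the coefficient $c$ occurring in $\tilde\gamma_\rho$ in the borderline case $\mathrm{ord}(\gamma-\gamma_k)=\rho$, one gets $\mathrm{ord}(\tilde\gamma_\rho-\gamma_k)=\mathrm{ord}(\gamma-\gamma_k)$; for $\gamma_k=\gamma$ one gets $\mathrm{ord}(\tilde\gamma_\rho-\gamma)=\rho$; substituting into $f(\tilde\gamma_\rho(y),y)=u(\tilde\gamma_\rho,y)\prod_k(\tilde\gamma_\rho-\gamma_k)$ yields the claim. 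The second step is to determine where the Newton diagram of $F(X,Y):=f(X+\tilde\gamma(Y),Y)=u(X+\tilde\gamma,Y)(X-\delta)^m\prod_{\gamma_k\ne\gamma}(X+\tilde\gamma-\gamma_k)$ meets the line $i\rho+j/N=m\rho+y_1$. For this I would introduce the $\rho$-weight $w_\rho(\sum p_{ij}X^iY^{j/N}):=\min\{\,i\rho+j/N : p_{ij}\ne0\,\}$ and the associated initial form $\mathrm{in}_\rho$, which is multiplicative in the sense that $\mathrm{in}_\rho(PQ)=\mathrm{in}_\rho(P)\,\mathrm{in}_\rho(Q)$ whenever the right-hand side is nonzero. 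Computing factor by factor: $\mathrm{in}_\rho(u(X+\tilde\gamma,Y))=u(0,0)$ since every other monomial of $u(X+\tilde\gamma,Y)$ has positive weight; $\mathrm{in}_\rho((X-\delta)^m)=X^m$ since $\mathrm{ord}\,\delta>\rho$; and $\mathrm{in}_\rho(\prod_{\gamma_k\ne\gamma}(X+\tilde\gamma-\gamma_k))$ equals a nonzero constant times a power of $Y$ times $\prod_{\mathrm{ord}(\gamma-\gamma_k)=\rho}(X+a_kY^\rho)$, with $a_k$ the leading coefficient of $\tilde\gamma-\gamma_k$. Multiplying, $\mathrm{in}_\rho(F)=X^m Q(X,Y)$ with $Q$ a nonzero polynomial in which $X$ occurs only with nonnegative exponents; hence $w_\rho(F)=m\rho+y_1$ and the Newton dots of $F$ on the line $i\rho+j/N=m\rho+y_1$ are exactly the monomials of $\mathrm{in}_\rho(F)$, all of $X$-degree $\ge m$, the value $m$ being attained since the coefficient of $X^mY^{y_1}$ in $\mathrm{in}_\rho(F)$ is nonzero. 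This is precisely the asserted formula.

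For item~(iii) the hypothesis provides a Newton--Puiseux root $\eta$ of $h=\gcd(f,g)$ with $\mathrm{ord}(\tilde\gamma-\eta)>\rho$. Since $h\mid f$, $\eta$ is a Newton--Puiseux root of $f$ as well; combining $\mathrm{ord}(\tilde\gamma-\eta)>\rho$ with $\mathrm{ord}(\tilde\gamma-\gamma)>\rho$ (Remark~\ref{rm41}(ii)) gives $\mathrm{ord}(\gamma-\eta)>\rho=\rho(\gamma,f)$. Were $\eta\ne\gamma$, this would contradict the definition of $\rho(\gamma,f)$; hence $\eta=\gamma$, and since $h\mid g$ it follows that $\gamma$ is a Newton--Puiseux root of $g$.

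I expect item~(ii) to be the main obstacle: everything there rests on the clean separation inside $\mathrm{in}_\rho(F)$ of the factor $(X-\delta)^m$ — which, because $\mathrm{ord}\,\delta>\rho$, contributes exactly $X^m$ — from the remaining factors, whose initial forms never pull the $X$-degree below $0$, and on verifying that no cancellation occurs when the three initial forms are multiplied, so that the line $i\rho+j/N=m\rho+y_1$ genuinely first meets the Newton diagram of $F$ at $X$-degree $m$. Items~(i) and~(iii), by contrast, are brief arguments about complex conjugation and the contact order.
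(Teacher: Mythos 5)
Your proof is correct, and while items (i) and (iii) coincide with the paper's own arguments (the conjugate root $\bar\gamma$ forcing $\rho(\gamma,f)\geq$ the first non-real exponent; the ultrametric inequality identifying the root of $h$ that is closer than $\rho$ to $\tilde\gamma$ with $\gamma$ itself), your item (ii) takes a genuinely different route. The paper stays inside its Newton-polygon/sliding formalism: by Claim \ref{Claim41} the recursive sliding of $\tilde\gamma$ along $f$ is unique and converges to $\gamma$, the second Newton edge $E_2$ of $\mathbb P(f,\tilde\gamma)$ has angle $\arctan\rho$, and since at each step the highest-edge polynomial has a single root of full multiplicity, $\mathrm{mult}_\gamma f=\deg\mathcal E_1=\mathrm{ord}\,\mathcal E_2=\min\{i\mid (i,j/N)\in E_2\}$, which becomes formula \eqref{mult} because Lemma \ref{Lemma22}(i) identifies $E_2$ with the set of dots where $i\rho+j/N=\mathrm{ord}\, f(\tilde\gamma_\rho(y),y)$. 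You instead bypass the sliding entirely: from the local factorization $f=u\prod_k(x-\gamma_k)$ you compute the $\rho$-weighted initial form of $F(X,Y)=f(X+\tilde\gamma(Y),Y)$ factor by factor, obtaining $u(0,0)\,X^m$ times a monomial in $Y$ times $\prod_{\mathrm{ord}(\gamma-\gamma_k)=\rho}(X+a_kY^{\rho})$, whose pure-$Y$ term has nonzero coefficient $\prod_k a_k$; hence on the supporting line $i\rho+j/N=m\rho+y_1=\mathrm{ord}\, f(\tilde\gamma_\rho(y),y)$ the minimal $X$-degree is exactly $m$. The ingredients you rely on are all in order: multiplicativity of initial forms (the associated graded ring is a domain, so no cancellation occurs), $\mathrm{ord}(\gamma-\gamma_k)\leqslant\rho$ for $\gamma_k\neq\gamma$ by maximality in the definition of the contact order, and genericity of the coefficient in $\tilde\gamma_\rho$ giving $\mathrm{ord}(\tilde\gamma_\rho-\gamma_k)=\mathrm{ord}(\gamma-\gamma_k)$ and $\mathrm{ord}(\tilde\gamma_\rho-\gamma)=\rho$. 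Your version buys a self-contained, cancellation-free description of all Newton dots on the relevant supporting line without invoking Lemma \ref{Lemma22} or the uniqueness-of-sliding argument; the paper's version avoids the explicit Weierstrass--Puiseux computation and reuses machinery already established for Theorems \ref{thm31} and \ref{thm32}. (Both your proof and the paper's tacitly assume $\gamma$ is not the only Newton--Puiseux root of $f$, so that $\rho(\gamma,f)$ is defined.)
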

\begin{proof}
{(i) Assume for contradiction that $\gamma$ is not real and write
$$\gamma(y)=\phi(y)+cy^\alpha + \textrm{ higher order terms},$$
where $\phi$ is the sum of terms of order lower than $\alpha$ with real coefficients and $c\in \mathbb{C}\setminus \mathbb{R}$.
Since $f$ is real, the conjugate
$$\bar{\gamma}(y)=\phi(y)+\bar{c}y^\alpha + \textrm{ higher order terms}$$
is also a Newton--Puiseux root of $f.$
Thus
$$\begin{array}{lll}
\rho(\gamma,f)&=&\max\{\mathrm{ord}\left(\gamma(y)-\gamma'(y)\right)\mid \gamma\neq \gamma'\in \mathcal V(f)\}\\
&\geq&\mathrm{ord}\left(\gamma(y)-\bar\gamma(y)\right)=\alpha.
\end{array}$$
By definition of truncated Newton--Puiseux root, $\tilde\gamma$ contains the term $cy^\alpha$ so it is not real which is a contradiction.
Consequently $\gamma$ is real.}

Let $\Bbb P(f,\tilde\gamma)$ be the Newton polygon of $f$ relative to $\tilde\gamma$ and let $E_1,\ldots,E_s$ be its Newton edges. Let $\theta_i$ and $\mathcal E_i$ be the Newton angle and the polynomial associated to $E_i$ respectively. Consider a progress of recursive slidings 
$$\tilde\gamma\to\tilde\gamma_1\to\ldots\to\tilde\gamma_\infty$$
of $\tilde\gamma$ along $f$. The following claim is a direct consequence of Lemma \ref{Lemma22}. 
\begin{claim}\label{Claim41}  
We have
 $$\mathrm{ord}\left(\tilde\gamma(y)-\tilde\gamma_\infty(y)\right)=\max \{\mathrm{ord}\left(\tilde\gamma(y)-\gamma'(y)\right)\mid \gamma'\in \mathcal V(f)\}=\theta_1.$$
and   $$\rho=\max \{\mathrm{ord}\left(\tilde\gamma(y)-\gamma'(y)\right)\mid \gamma\neq\gamma'\in \mathcal V(f)\}=\theta_2.$$
\end{claim}
This together with Remark \ref{rm41} implies that $\tilde\gamma_\infty=\gamma$. This means that, there is a unique progress of  recursive slidings of $\tilde\gamma$ along $f$. Write
$$\tilde\gamma_\infty=\tilde\gamma+a_1 y^{\alpha_1}+a_2 y^{\alpha_2}+\ldots.$$
Then for all $n\geq 1$,
$$\tilde\gamma_n=\tilde\gamma+a_1 y^{\alpha_1}+a_2 y^{\alpha_2}+\ldots+a_n y^{\alpha_n}.$$

Since $\tilde\gamma_n$ is the only sliding of $\tilde\gamma_{n-1}$ along $f$, the polynomial $\mathcal{E}_H^{n-1}$ of associated to the highest Newton edges of $\mathbb P(f,\tilde\gamma_{n-1})$ has only one root $a_n$ of multiplicity $\deg \mathcal{E}_H^{n-1}=\deg \mathcal{E}_H^{0}=\deg \mathcal{E}_1$. Then the multiplicity $\mathrm{mult}_{\gamma}f$ of $\gamma$ is equal to 
$$\deg \mathcal{E}_1=\mathrm{ord}\ \mathcal{E}_2=\min\{i\mid (i,j/N)\in E_2\}.$$
 Since 
$$E_2=\{(i,j/N)\in \mathrm{supp}(f)\mid i\theta_2+j/N=\mathrm{ord}(f(\tilde\gamma_\rho(y),y))\}$$
it follows that 
$$\mathrm{mult}_{\gamma}f=\min\{i\mid i\rho+j/N=\mathrm{ord}(f(\tilde\gamma_\rho(y),y))\text{ and } c_{ij}\neq 0\},$$
which gives (ii).
Now, we take a root $\xi$ of $h$ such that $\mathrm{ord}\left(\xi(y)-\tilde\gamma(y)\right)>\rho(\gamma,f)$. Then, it follows from the definition of $\rho(\gamma,f)$ that $\xi=\gamma$. This completes (iii).
\end{proof}
{ As a consequence,} we obtain the following algorithm for computing the \L ojasiewicz exponent $\mathcal{L}_g(f)$.
\subsection*{Algorithm BiLojEx} \

INPUT: Two polynomials $f$ and $g$ in $\mathbb{Q}[x,y]$ of positive orders. % $m$ and $n,$ respectively.

OUTPUT: Decide whether or not $\{f=0\}\subset \{g=0\}$ and compute the \L ojasiewicz exponent $\mathcal{L}_g(f)$.

\begin{enumerate}[{\rm Step 1.}]

%\item If $g(0, 0) \ne 0,$ the limit $\displaystyle\lim_{(x, y) \to (0, 0)}\frac{f(x, y)}{g(x, y)}$ exists and equals to $\displaystyle\frac{f(0, 0)}{g(0, 0)};$ the algorithm stops.

%\item If $m < n,$ the limit $\displaystyle\lim_{(x, y) \to (0, 0)}\frac{f(x, y)}{g(x, y)}$ does not exist and go to the next step.
%If $m > n,$ let $L := 0$ and go to the next step. Otherwise (i.e., $m = n$), take any $(\bar{x}, \bar{y}) \in \mathbb{R}^2 \setminus (f_m^{-1}(0) \cup 
%g_n^{-1}(0))$ and let $L := \frac{f_m(\bar{x}, \bar{y})}{g_n(\bar{x}, \bar{y})}.$ If $f_m \not \equiv L g_n,$ the limit $\displaystyle\lim_{(x, y) \to (0, 0)}\frac{f(x, y)}{g(x, y)}$ does not exist and go to next step. 
%Otherwise, replace $f$ by $f - L g$ (then $\ord f > \ord g$) and proceed to the next step.

\item If one of the polynomials $f, g$ is not $x$-regular, make a linear transformation, so that the new polynomials $f, g$ are $x$-regular. Compute $h:=\gcd(f,g)$.

%\item If $\left(g_n\frac{\partial f_m}{\partial x}-f_m\frac{g_n}{\partial x}\right)(0,1)  = 0,$ replace $f$ and $g$ by $f(x, x + y)$ and $g(x, x + y),$ respectively, and compute the new polynomials $F_{f,g}$ and $G_{f,g}.$ (Then, all $f, g, F_{f,g}$ and $G_{f,g}$ are $y$-regular.)

\item Compute the set $\tilde{\mathcal{V}}(f)$  of truncated roots of $f$. Compute the sets $\tilde{\mathcal{V}}_\Bbb R(f)$ and $\tilde{\mathcal{V}}_\Bbb R(h)$ of truncated real roots of $f$ and $h$.
  
If $\sharp\ \tilde{\mathcal{V}}_\Bbb R(f)\leq \sharp\ \tilde{\mathcal{V}}_\Bbb R(h)$ then $\{f=0\}\subset \{g=0\}$ and proceed to the next step. Otherwise, the \L ojasiewicz exponent $\mathcal{L}_g(f)$ is not defined and the algorithm stops.

\item Compute for each ${\gamma}\in \tilde{\mathcal{V}}_\Bbb R(f)$ the multiplicies $\mathrm{mult}_{\gamma} f$ and $\mathrm{mult}_{\gamma} g$ by Formula (\ref{mult}).

\item Compute the set $\tilde{\mathcal{V}}_a(f)$ of the real approximations of series in $\tilde{\mathcal{V}}(f)\setminus \tilde{\mathcal{V}}_\Bbb R(f)$ and compute
$$\mathscr{L}^+_g(f):=\max\left\{\ell(\gamma),\frac{\mathrm{mult}_{\gamma} f}{\mathrm{mult}_{\gamma} g}\ \Big|\ \gamma\in\tilde{\mathcal{V}}_a(f),{\gamma}\in  \tilde{\mathcal{V}}_\Bbb R(f)\right\}.$$

\item Set ${\tilde f}(x,y):=f(x,-y)$ and ${\tilde g}(x,y):=g(x,-y)$ and compute $\mathscr{L}^+_{ \tilde g}({\tilde f})$.

\item $\mathscr{L}_g(f):=\max\{\mathscr{L}^-_g(f),\mathscr{L}^+_{ \tilde g}({\tilde f})\}$.
\end{enumerate}

\section{Applications}
Computing limits of (real) multivariate functions at given points is one of the basic problems in computational mathematics. Let $\frac{g}{f}$ be a rational function with $f,g$ real polynomials. It is well known that if the limit $\lim_{(x,y)\to (0,0)} \frac{g(x,y)}{f(x,y)}$ exists, it can be easily computed by evaluating the limit along a ray $R$ through $(0,0)$. Therefore, replacing $\lim_{(x,y)\to (0,0)} \frac{g(x,y)}{f(x,y)}$ by $\lim_{(x,y)\to (0,0)} \frac{g(x,y)-L f(x,y)}{f(x,y)}$ with $L=\lim_{R\ni(x,y)\to (0,0)} \frac{g(x,y)}{f(x,y)}$ for some ray $R$, one reduces the problem to studying whether $\lim_{(x,y)\to (0,0)} \frac{g(x,y)}{f(x,y)}=0$.
%\section{Limits of bivariate rational functions}
The following sufficient condition is straightforward.
\begin{proposition}
\begin{enumerate}
\item If  $0<\mathcal{L}_g(f)<1$, then
$$\displaystyle\lim_{(x,y)\to (0,0)} \frac{g(x,y)}{f(x,y)}=0.$$
\item  If  $\mathcal{L}_g(f)>1$, then  the limit $\lim_{(x,y)\to (0,0)} \frac{g(x,y)}{f(x,y)}$ does not exist.
\end{enumerate}
\end{proposition}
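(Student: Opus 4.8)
The plan is to deduce both parts directly from the definition of the \L ojasiewicz exponent (here $\mathcal{L}_g(f)=\mathscr{L}_g(f)$) together with the characterisation~\eqref{Eqn1}; only the qualitative identity~\eqref{Eqn1} is needed, not the explicit formulas of Theorems~\ref{thm31}--\ref{thm32}.

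For part~(1): since $\mathscr{L}_g(f)<1$, I would fix some $\alpha$ with $\mathscr{L}_g(f)<\alpha<1$. By the definition of $\mathscr{L}_g(f)$ as the infimum of the exponents admissible in~\eqref{Lojasiewicz}, there are constants $C,r>0$ with $|f(x,y)|\geqslant C\,|g(x,y)|^{\alpha}$ for $|(x,y)|\leqslant r$. Then for such $(x,y)$ with $f(x,y)\neq 0$ one has $\bigl|\tfrac{g(x,y)}{f(x,y)}\bigr|=0$ if $g(x,y)=0$, and $\bigl|\tfrac{g(x,y)}{f(x,y)}\bigr|\leqslant C^{-1}\,|g(x,y)|^{1-\alpha}$ otherwise; since $1-\alpha>0$ and $g$ is continuous with $g(0,0)=0$, the right-hand side tends to $0$ as $(x,y)\to(0,0)$, which is exactly the claim. (Only the upper bound $\mathscr{L}_g(f)<1$ is used; positivity of the exponent is automatic from the standing hypotheses.) I expect this part to be entirely routine.

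For part~(2): by~\eqref{Eqn1}, $\mathscr{L}_g(f)=\sup_\phi\ell(\phi)$, the supremum being over analytic arcs $\phi(t)=(x(t),y(t))$ through the origin with $g\circ\phi\not\equiv 0$. As this supremum exceeds $1$, there is such an arc with $\ell(\phi)>1$. Because $\{f=0\}\subset\{g=0\}$, the arc is not contained in $\{f=0\}$ either, so $p:=\mathrm{ord}\,f(\phi(t))$ and $q:=\mathrm{ord}\,g(\phi(t))$ are finite positive integers with $p>q$. Writing $f(\phi(t))=a\,t^{p}+\cdots$ and $g(\phi(t))=b\,t^{q}+\cdots$ with $a,b\neq 0$ gives $\tfrac{g(\phi(t))}{f(\phi(t))}=\tfrac{b}{a}\,t^{q-p}\,(1+o(1))$ with $q-p<0$, so $\bigl|\tfrac{g(\phi(t))}{f(\phi(t))}\bigr|\to+\infty$ as $t\to 0$. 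Since $\phi(t)\to(0,0)$, the function $g/f$ is unbounded on every punctured neighbourhood of the origin, and therefore has no finite limit there.

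The only step that needs a moment of care --- and as close to an obstacle as this proof gets --- is the passage in part~(2) from ``$\mathscr{L}_g(f)>1$'' to ``there is an admissible arc with $\ell(\phi)>1$'': this is where~\eqref{Eqn1} (which rests on the Curve Selection Lemma) is invoked, and where the standing inclusion $\{f=0\}\subset\{g=0\}$ is needed to rule out $f\circ\phi\equiv 0$, so that $\ell(\phi)$ is a genuine finite positive rational. Everything else is elementary order-of-vanishing bookkeeping.
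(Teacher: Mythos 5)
Your proof is correct, and it is exactly the routine argument the paper has in mind when it states the proposition without proof ("straightforward"): part (1) follows from the defining inequality \eqref{Lojasiewicz} with an admissible exponent $\alpha<1$, and part (2) from the arc characterization \eqref{Eqn1}, which yields an arc with $\mathrm{ord}\,f(\phi(t))>\mathrm{ord}\,g(\phi(t))$ and hence $|g/f|\to\infty$ along it. Your handling of the two needed side points (that $\{f=0\}\subset\{g=0\}$ keeps $f\circ\phi\not\equiv 0$, and that unboundedness near the origin rules out a finite limit) is sound, so there is nothing to add.
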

In the case when $\mathcal{L}_g(f)=1$ the limit $\lim_{(x,y)\to (0,0)} \frac{g(x,y)}{f(x,y)}$ may exist or not. However, we can deduce the following corollary from the proof of our main results (Theorem \ref{thm32}).
\begin{corollary}\label{limit}
Let $f\in \Bbb R[x,y]$ be regular in $x$ and $\mathcal V_{\mathbb R}(f)$ be the set of real approximations of non-real Newton--Puiseux roots of $f$ as defined in Definition \ref{def21}. Asume that $f$ and $g$ have no common factors, then 
$$\lim_{(x,y)\to (0,0)} \frac{g(x,y)}{f(x,y)}=0$$
if and only if $f=0$ has only isolated point $(0,0)$ and
$$\lim_{y\to 0} \frac{g(\phi(y),y)}{f(\phi(y),y)}=0$$
for all $\phi\in \mathcal V_{\mathbb R}(f)$.
\end{corollary}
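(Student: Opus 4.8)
The plan is to reduce everything to the language of \L ojasiewicz exponents, using the hypothesis that $f$ and $g$ share no common factor so that $g/f$ is a genuine rational function germ and the zero set of $g$ does not contain a curve of the zero set of $f$. First I would dispose of the easy direction: if $f=0$ has $(0,0)$ as an isolated point and $\lim_{y\to 0} g(\phi(y),y)/f(\phi(y),y)=0$ for all $\phi\in\mathcal V_{\mathbb R}(f)$, then I want to show $\lim_{(x,y)\to 0} g/f = 0$. Conversely, if the limit is $0$, then along every curve it is $0$, in particular along the curves $(\phi(y),y)$ with $\phi\in\mathcal V_{\mathbb R}(f)$, and also $f$ cannot vanish on a curve through $0$ (otherwise $g$ would have to vanish there too, contradicting no common factor and forcing a pole or an indeterminate quotient along that curve), so $\{f=0\}$ is the isolated point $\{(0,0)\}$ (using that $f$ is bivariate, its real zero set near $0$ is either a point or contains a real analytic arc by the Curve Selection Lemma).

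The substantive direction is the ``if'' part. Here the idea is: the finiteness/vanishing of the limit is equivalent, via the Curve Selection Lemma as in~\eqref{Eqn1}, to the statement that $\ell(\psi) := \mathrm{ord}\, g(\psi)/\mathrm{ord}\, f(\psi) \geqslant 1$ for every real analytic arc $\psi$ through $0$ not contained in $\{f=0\}$, \emph{and} in fact $\mathrm{ord}\, g(\psi) > \mathrm{ord}\, f(\psi)$ whenever equality of orders would otherwise make the leading coefficients relevant; more precisely, $\lim g/f = 0$ iff $\mathrm{ord}\, g(\psi) > \mathrm{ord}\, f(\psi)$ for all such $\psi$, iff (after the standard reduction to $x$-regular $f,g$ and to arcs $(\psi(y),y)$) one has $\ell(\psi)>1$ in the degenerate cases and the leading-term quotient tends to $0$ on the ``tight'' arcs. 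The key point, which I would extract from the proof of Theorem~\ref{thm32}, is that the worst arcs --- those on which $\mathrm{ord}\, g$ is closest to $\mathrm{ord}\, f$ --- can always be replaced, via the sliding/approximation machinery of Claims~\ref{Claim26}--\ref{Claim33}, by an arc lying in $\mathcal V_{\mathbb R}(f)$ (together with the possibility $y\mapsto -y$, handled symmetrically) without decreasing the relevant comparison; since $f$ has no real Newton--Puiseux roots (its real zero set is $\{0\}$), the multiplicity terms $m_j/n_j$ do not appear, and the behavior of $g/f$ along the arcs of $\mathcal V_{\mathbb R}(f)$ controls the limit. Thus $\lim_{y\to 0} g(\phi(y),y)/f(\phi(y),y)=0$ for all $\phi\in\mathcal V_{\mathbb R}(f)$ (plus the mirrored version, which follows since $\mathcal V_{\mathbb R}(f)$ and $\mathcal V_{\mathbb R}(\bar f)$ play symmetric roles) forces $\mathrm{ord}\, g(\psi) > \mathrm{ord}\, f(\psi)$ on every arc, hence $\lim g/f = 0$.

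Concretely, the steps in order: (1) reduce to $f,g$ $x$-regular by a generic linear change of coordinates, noting this does not affect the existence or value of the limit nor the hypotheses; (2) observe $\{f=0\}=\{0\}$ near the origin is equivalent to $f$ having no real Newton--Puiseux root, hence $k=0$ in the notation of Theorem~\ref{thm32}, so $\mathscr L^+_g(f) = \max\{\ell(\gamma)\mid \gamma\in\mathcal V_{\mathbb R}(f)\}$ and $\mathscr L_g(f) = \max\{\mathscr L^+_g(f),\mathscr L^+_{\bar g}(\bar f)\}$; (3) show $\lim g/f = 0$ iff for every arc $\psi$, $\mathrm{ord}\, g(\psi) > \mathrm{ord}\, f(\psi)$, and argue via the sliding argument of Theorem~\ref{thm32} that the supremum defining the obstruction is realized (or approached) on arcs in $\mathcal V_{\mathbb R}(f)$ or $\mathcal V_{\mathbb R}(\bar f)$, so that strict inequality of orders on those finitely many arcs propagates to all arcs; (4) translate ``$\mathrm{ord}\, g > \mathrm{ord}\, f$ on $\phi\in\mathcal V_{\mathbb R}(f)$'' into ``$\lim_{y\to 0} g(\phi(y),y)/f(\phi(y),y)=0$'', which is immediate since along a single Puiseux arc the quotient of two convergent Puiseux series tends to $0$ exactly when the order of the numerator strictly exceeds that of the denominator.

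\emph{The main obstacle} I expect is step (3): making rigorous that it suffices to test the limit on the finite set $\mathcal V_{\mathbb R}(f)$ (and its mirror) rather than all arcs. The delicate case is an arc $\psi$ with $\mathrm{ord}\, g(\psi) = \mathrm{ord}\, f(\psi)$ but with the leading coefficients such that the quotient has a nonzero limit along $\psi$ while along no arc in $\mathcal V_{\mathbb R}(f)$ does this happen --- one must show this cannot occur, by running the sliding procedure from Claims~\ref{Claim26}--\ref{Claim33} to replace $\psi$ by an arc in $\mathcal V_{\mathbb R}(f)$ along which the order comparison is no better, thereby exposing the same obstruction on an arc we \emph{are} allowed to test. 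Handling the boundary between the ``order-strictly-bigger'' regime and the ``orders-equal-with-bad-leading-term'' regime, and checking it interacts correctly with the $y\mapsto -y$ symmetry, is where the real care is needed.
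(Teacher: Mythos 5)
Your overall strategy (reduce the conclusion to ``$\mathrm{ord}\, g > \mathrm{ord}\, f$ along every real analytic arc'' via the Curve Selection Lemma, and control all arcs by the finitely many test arcs supplied by Theorem~\ref{thm32} with $k=0$) is indeed the intended deduction, and your forward direction is fine. But there is a genuine gap exactly at the point you flag as the main obstacle, and it is not repairable as written: you assert that the mirrored condition --- the one involving $\bar f(x,y)=f(x,-y)$, $\bar g(x,y)=g(x,-y)$ and $\mathcal V_{\mathbb R}(\bar f)$, which is what the term $\mathscr L^-_g(f)$ in Theorem~\ref{thm32} requires --- ``follows since $\mathcal V_{\mathbb R}(f)$ and $\mathcal V_{\mathbb R}(\bar f)$ play symmetric roles.'' It does not. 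The elements of $\mathcal V_{\mathbb R}(f)$ are Puiseux series in $y$, in general real and defined only for $y>0$, so the stated hypothesis controls $g/f$ only along the valleys of $|f|$ in the half-plane $y>0$; the valleys in $y<0$ are governed by $\mathcal V_{\mathbb R}(\bar f)$, which is genuinely different data. Concretely, take $f=(x^2+y^3)^2+y^{10}$ and $g=x^2y^4$: then $f$ is regular in $x$, $f$ and $g$ are coprime, the real zero set of $f$ is $\{(0,0)\}$, every non-real Newton--Puiseux root of $f$ has real approximation $cy^{3/2}$ ($c$ generic), and along $x=cy^{3/2}$ one has $\mathrm{ord}\,f=6<7=\mathrm{ord}\,g$, so $g/f\to 0$ along every $\phi\in\mathcal V_{\mathbb R}(f)$; nevertheless along the analytic arc $(x,y)=(s^{3},-s^{2})$ one gets $f=s^{20}$, $g=s^{14}$, so $g/f\to+\infty$. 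Hence the one-sided hypothesis cannot force the two-dimensional conclusion: the symmetry claim is false, and the missing mirrored hypothesis (the same condition for $\bar g/\bar f$ along the arcs of $\mathcal V_{\mathbb R}(\bar f)$) must actually be assumed, exactly as Theorem~\ref{thm32} takes the maximum of $\mathscr L^+_g(f)$ and $\mathscr L^-_g(f)$. Once that is in place, your argument closes, and more simply than you propose: there is no need to rerun the sliding argument of Claims~\ref{Claim26}--\ref{Claim33}; since $\{f=0\}=\{(0,0)\}$ forces $f$ and $\bar f$ to have no real Newton--Puiseux roots, one has $k=0$, Theorem~\ref{thm32} used as a black box gives $\mathscr L_g(f)=\max\ell(\gamma)<1$ over the finitely many test arcs, hence $|f|\geqslant C|g|^{\alpha}$ near $0$ with $\alpha<1$, and $|g/f|\leqslant C^{-1}|g|^{1-\alpha}\to 0$.

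Two secondary points. First, your step (1) (a generic linear change of variables to make $g$ regular in $x$) is not innocent: it changes the Newton--Puiseux roots in $y$ and hence $\mathcal V_{\mathbb R}(f)$, so the hypothesis you are allowed to use is not preserved; you must either check that the part of Theorem~\ref{thm32} you need survives without $x$-regularity of $g$, or transport the hypothesis through the coordinate change explicitly. Second, your claim that ``$\{f=0\}=\{(0,0)\}$ is equivalent to $f$ having no real Newton--Puiseux root'' is only half true (consider $x^2+y^3$, which has no real Newton--Puiseux roots but whose real zero set is a curve in $y\leqslant 0$): the correct equivalence is that neither $f$ nor $\bar f$ has a real Newton--Puiseux root --- the same $y<0$ blind spot as above.
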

This provides a new algorithsm, which are easy to implement, to determine whether the limit $\lim_{(x,y)\to (0,0)} \frac{f(x,y)}{g(x,y)}$ exists and compute the limit if it exists. 
\subsection*{Acknowledgment} 
A part of this work was done while the first author and the third author were visiting at Vietnam Institute for Advanced Study in Mathematics (VIASM) in the spring of 2022. These authors would like to thank the Institute for hospitality and support. 
\subsection*{Conflict of interest} 
On behalf of all authors, the corresponding author states that there is no conflict of interest. 
\subsection*{Data availability} 
Data sharing is not applicable to this article as no new data were created or analyzed in this study. 

\bibliographystyle{abbrv}

\end{document}